\documentclass[a4paper,11pt,times,authoryears,reqno]{amsart}

\usepackage{geometry}
 \geometry{
 a4paper,
 left=2.2cm,
 right=2cm,
 top=2.5cm,
 bottom=2.5cm,
 }

\usepackage{bm}
\usepackage{amssymb}
\usepackage{graphicx}
\usepackage{mathrsfs}
\usepackage{subcaption}
\usepackage{epstopdf}
\usepackage{enumerate}
\usepackage{booktabs}
\usepackage{adjustbox}
\usepackage[colorlinks,citecolor=blue]{hyperref}

\setlength{\parskip}{0cm}
\setlength{\parindent}{1em}


\newtheorem{theorem}{Theorem}[section]
\newtheorem{lemma}[theorem]{Lemma}

\theoremstyle{definition}

\theoremstyle{remark}

\newtheorem{proposition}[theorem]{Proposition}

\numberwithin{equation}{section}

\linespread{1}

      \def\b{\beta}        
\def\th{\theta}

\def\no{\nonumber}

  \def\no{\nonumber}

\def\b0{{\mathbf 0}}   
\def\bW{{\mathbf W}}

\def\bA{{\mathbf A}} \def\bC{{\mathbf C}} \def\bD{{\mathbf D}}

\def\bH{{\mathbf H}} 
 
\def\bI{{\mathbf I}}
\def\bL{{\mathbf L}}  \def\bP{{\mathbf P}} \def\bQ{{\mathbf Q}}
   
 \def\bW{{\mathbf W}} \def\bX{{\mathbf X}}

  \def\bh{{\mathbf h}}

\def\bx{{\mathbf x}} \def\by{{\mathbf y}} \def\bz{{\mathbf z}}

\def\bbeta{{\boldsymbol{\beta}}}

\def\hbpi{\widehat{\boldsymbol \pi}}
\def\th{\widehat{\theta}}
\def\btheta{{\boldsymbol{\theta}}}

\def\bvt{\boldsymbol{\vartheta}}

 \def\bdelta{{\boldsymbol{\delta}}}
\def\bzeta{{\boldsymbol{\zeta}}} \def\bgamma{{\boldsymbol{\gamma}}}
\def\bLambda{{\boldsymbol{\Lambda}}}   
\def\bDelta{{\boldsymbol{\Delta}}}
 \def\bGamma{{\boldsymbol{\Gamma}}}

\def\bzeta{\boldsymbol \zeta}       
  \def\btheta{{\boldsymbol{\theta}}}

 \def\tbbeta{\widetilde{\boldsymbol \beta}}

\def\hbbeta{\widehat{\boldsymbol \beta}}

\def\bel{\begin{eqnarray}\label}  \def\eel{\end{eqnarray}}
\def\bes{\begin{eqnarray*}}  \def\ees{\end{eqnarray*}}
\def\bLd{ {\boldsymbol {L}_d} }

\def\tI{\textrm{I}}
\def\H{{\mathbb H}} 
\def\E{{\mathbb E}}

\def\hbUR{\bm{\widehat\beta}_{\textrm{UR}}}
\def\hbRE{\bm{\widehat\beta}_{\textrm{RE}}}
\def\hbPT{\bm{\widehat\beta}_{\textrm{PT}}}
\def\hbPTE{\bm{\widehat\beta}_{\textrm{PTE}}}

\def\hbS{\bm{\widehat\beta}_{\textrm{S}}}
\def\hbPS{\bm{\widehat\beta}_{\textrm{PS}}}
\def\hbMLE{\bm{\widehat\beta}_{\textrm{MLE}}}
\def\hbRMLE{\bm{\widehat\beta}_{\textrm{RMLE}}}

\begin{document}

\title{Preliminary Testing Derivatives of a Linear Unified Estimator in the Logistic Regression Model}


\author{Yasin Asar*}
\author{Bahad{\i}r Y\"{u}zba\c{s}{\i}}
\author{Mohammad Arashi}
\author{Jibo Wu}

\address[Yasin Asar]
{*Corresponding Author: Department of Mathematics-computer Sciences
Necmettin Erbakan University, Konya, Turkey}
\email{yasar@konya.edu.tr, yasinasar@hotmail.com}

\address[Bahad{\i}r Y\"{u}zba\c{s}{\i}]
{Department of Econometrics, Inonu University, Malatya 44280, Turkey}
\email{b.yzb@hotmail.com}

\address[Mohammad Arashi]
{Department of Statistics, School of Mathematical Sciences, Shahrood University of Technology, Shahrood, Iran}
\email{\textbf{m\_arashi\_stat}@yahoo.com}

\address[Jibo Wu]
{Key Laboratory of Group and Graph Theories and Applications, Chongqing University of Arts and Sciences, Chongqing, China}
\email{linfen52@126.com}

\subjclass[2010]{Primary 62J07; Secondary 62J02}

\date{}

\begin{abstract}
Recently, the well-known Liu estimator \cite{Liu1993} is attracted researchers' attention in regression parameter estimation for an ill-conditioned linear model. It is also argued that imposing sub-space hypothesis restriction on parameters improves estimation by shrinking toward non-sample information. \cite{chang} proposed the almost unbiased Liu estimator (AULE) in the binary logistic regression. In this article, some improved unbiased Liu type estimators, namely, restricted AULE, preliminary test AULE, Stein-type shrinkage AULE and its positive part for estimating the regression parameters in the binary logistic regression model are proposed based on the work \cite{chang}. The performances of the newly defined estimators are analyzed through some numerical results. A real data example is also provided to support the findings.

\medskip
\textbf{Keywords:} Biasing parameter; Logistic regression; MLE; Preliminary test estimator; Unbiased Liu estimator

\end{abstract}

\maketitle

\section{Introduction}
Consider the following logistic regression model
\begin{equation} \label{model} 
y_{i} =\pi_{i} +{\it \epsilon}_{i} ,\; \; i=1,\ldots ,n, 
\end{equation} 
where 
\[\pi _{i} =\pi \left(x_{i} \right)=E\left[y_{i} \right]=\frac{e^{x_{i}} \bbeta } {1+e^{x_{i} } \bbeta }  ,\; \; \; y_{i} \sim {\rm Bernoulli}\left(\pi _{i} \right)\] 
and $\bbeta =\left(\beta _{0} ,\beta _{1} ,\ldots ,\beta _{p} \right)^{{'} } $ denotes the unknown (p+1)-vector of regression coefficients, $\bX=\left(x_{1} ,\ldots ,x_{n} \right)^{\top } $ is the $n\times \left(p+1\right)$ data matrix with $\bx_{i} =\left(1,x_{1i} ,\ldots ,x_{pi} \right)^{\top} $  and ${\it \epsilon}_{i} $'s are independent with zero mean and variance equal to $w_{i} =\pi _{i} \left(1-\pi _{i} \right)$ therefore the conditional distribution of the dependent variable follows a binomial distribution having probability $\pi_i$. 

\noindent The maximum likelihood (ML) method is generally used to estimate the coefficient vector $\bbeta $. The corresponding log-likelihood equation of model \eqref{model} is given by
\begin{equation} \label{LLhood} 
L=\mathop{\sum }\limits_{i=1}^{n} y_{i} {\rm log}\left(\pi _{i} \right)+\left(1-y_{i} \right){\rm log}\left(1-\pi _{i} \right) 
\end{equation} 
where $\pi _{i} $ is the $i^{th} $ element of the vector $\hbpi ,$ $i=1,2,...,n.$

\noindent ML estimator (MLE), or unrestricted MLE (UMLE) can be obtained by maximizing the log-likelihood equation given in \eqref{LLhood}. 

\noindent Since Equation \eqref{LLhood} is non-linear in $\bbeta $, one may use the iteratively re-weighted least squares algorithm (IRLS) as follows \cite{saleh2013}:
\begin{equation} \label{MLE_iteration} 
\hbbeta^{t+1} =\hbbeta^{t} +\left(\bX^{\top} \widehat{\bW} \bX\right)^{-1} \bX^{\top} \widehat{\bW}^{t} \left(\by-\hbpi^{t} \right) 
\end{equation} 
where $\hbpi^{t} $is the estimated values of $\hbpi$ using $\hbbeta_{t} $ and $\widehat{\bW}^{t} = {\rm diag}\left(\hat{\pi }_{i}^{t} \left(1-\hat{\pi }_{i}^{t} \right)\right)$ such that $\hat{\pi }_{i}^{t} $ is the ith element of $\hbpi^{t} $. After some algebra, Equation \eqref{MLE_iteration} can be written as follows:
\begin{equation} \label{1.4)} 
\hbbeta_{MLE} =\left(\bX^{\top} \widehat{\bW} \bX\right)^{-1} \bX^{\top} \widehat{\bW}\widehat{\bz} 
\end{equation} 
where $\widehat{\bz}^{\top} =\left(z_{1} \cdots z_{n} \right)$ with $\eta _{i} =x'_{i} \bbeta $ and $z_{i} =\eta _{i} +(y_{i} -\pi _{i} )(\partial \eta _{i} /\partial \pi _{i} )$.

\noindent Suppose that $\bbeta $ is subjected to lie in the sub-space restriction $\bH\bbeta =\bh$, where $\bH$ is $q\times \left(p+1\right)$ known matrix and $\bh$ is a $q\times 1$ vector of pre-specified values. Then, the corresponding restricted MLE (RMLE) has the form
\begin{equation} \label{1.5)} 
\hbRMLE=\hbMLE -\bC^{-1} \bH^{\top} \left[\bH\bC^{-1} \bH^{\top} \right]^{-1} \left(\bH\hbMLE-\bh\right) 
\end{equation} 
where $\bC=\bX^{\top}\widehat{\bW}\bX$ \cite{saleh2013}.

\noindent In reality, to select one of the UMLE or RMLE, one needs to test whether the following null hypothesis is true or not
\[H_{0} :\bH\bbeta = \bh. \] 
For testing the null hypothesis against the alternative $H_{A} :\bH\bbeta \ne \bh$, following \cite{saleh2013}, we propose the following test statistic
\[L_{n} =n\left(\bH\hbbeta_{MLE} -\bh\right)^{\top} \left[\bH\left(\frac{1}{n} \bC\right)^{-1} \bH^{\top}\right]^{-1} \left(\bH\hbbeta_{MLE} -\bh\right). \] 
As $n\to \infty $, the above test statistic has asymptotic chi-square distribution with $q$ degrees of freedom.

\noindent Now, suppose that the following regularity assumptions hold
\begin{enumerate}

\item[(A1)]: $\frac{1}{n}\bC \to \bD$, as $n\to \infty $

\item[(A2)]: $max_{1\leq i \leq n} \bx_i^{'} \bC^{-1} \bx_i = o(n)$

\item[(A3)]: $\bL_{d}^{*} \to \bL_{d} $ as $n\to \infty $ where $\bL_{d} =\bI-\left(1-d\right)^{2} \left(\bD+\bI\right)^{-2} $.
\end{enumerate}

\noindent where $\bD$ is a finite and positive definite matrix and $\bx_{i} $ is the ith row of $\bX$ and $\bL_{d}^{*} =\bI-\left(1-d\right)^{2} \left(\frac{1}{n} \bC+\bI\right)^{-2} $.

\noindent Then,~$\hbbeta_{MLE} $ is asymptotically distributed according to a normal distribution, precisely 
\begin{equation} \label{dist_MLE}
\sqrt{n} \left(\hbMLE -\beta \right)\mathop{\to }\limits^{D} N_{p+1} \left(0,\bD^{-1} \right)
\end{equation}
where ${{\stackrel{D}{\rightarrow}}}$ denotes convergence in distribution \cite{saleh2013}. And we have
\[\mathop{\lim }\limits_{n\to \infty } P\left(L_{n} \le x|H_{o} \right)=H_{q} \left(x;0\right)\] 
where $H_{q} \left(x{\kern 1pt} ;{\rm \Delta }^{2} \right)$ is the cumulative distribution function (c.d.f.) of the non-central chi-square distribution with $q$ degrees of freedom and non-centrality parameter ${\rm \Delta }^{2} /2$.

\noindent Apparently, in applications, one selects one of the extremes UMLE or RMLE depending on output of the test. Hence, the preliminary test MLE (PTMLE) is given as \cite{saleh2013}
\begin{equation} 
\hbPTE =\hbMLE -\left(\hbMLE -\hbRMLE \right)I\left(L_{n} <\chi _{q,\alpha }^{2} \right) 
\end{equation} 
where $\chi _{q,\alpha }^{2} $ is the $\alpha $ level upper value of the null distribution of the test statistic $L_{n} $ and $I\left(A\right)$ is the indicator function for the set  $A$, see \cite{saleh2006} for details.

\noindent This estimator is also known as quasi-empirical Bayes estimator \cite{saleh2013}. The preliminary test estimation approach and its shrinkage derivatives have been considered by many researches. We refer to \cite{bancroft}, \cite{judge}, \cite{akdeniz-erol}, \cite{sen}, \cite{hubert}, \cite{ks2006}, \cite{saleh2006}, \cite{ks2012}, \cite{kibria2012}, \cite{kms}, \cite{arashi2012}, \cite{arabi}, \cite{yuzbasi2016} and \cite{yuzbasi2017} to mention a few.

\noindent When the multicollinearity exists, the use of logistic ridge estimator \cite{schaefer} and logistic Liu estimator \cite{mason2012}, to propose well performed estimators as alternatives to the MLE, has been frequently seen in the studies of researchers (\cite{asar2016}, \cite{nagarajah2017}, \cite{wu2016} etc.). However, since the occurrence of $H_{o} $ is under suspicious, the use of the preliminary test counterparts is neglected. 

\noindent In this paper, we consider the estimation of $\bbeta $ in the logistic regression model by developing preliminary test estimators of a set of almost unbiased Liu type estimators. To be updated and avoiding from doing repetitive work, the building block of our problem will be based on the very recent study due to \cite{chang}. The author proposed the almost unbiased Liu estimator (AULE) which is the unrestricted estimator of $\bbeta $ considered in this study and we denote it by UR  in the logistic regression model as follows:
\begin{equation} \label{1.7)} 
\hbUR =\bL_{d} \hbMLE. 
\end{equation} 
We refer to \cite{akdeniz-erol}, \cite{mason2012}, \cite{duran}, \cite{asar2016} and \cite{wu2016} for recent studies in Liu regression. 

\noindent Following \cite{kibria2012} and \cite{roozbeh}, we define the preliminary test unbiased Liu estimator (PT) having the following form
\begin{equation} \label{1.8)} 
\hbPT =\hbUR -\left(\hbUR -\hbRE \right)I\left(L_{n} <\chi _{q,\alpha }^{2} \right) 
\end{equation} 
where
\begin{equation} \label{1.9)} 
\hbRE = \bL_{d} \hbRMLE.   
\end{equation} 
which is the restricted almost unbiased Liu estimator (RE) proposed by \cite{wu2017}.

\noindent Although the focus of this study is the preliminary test estimator, due to its discontinuous nature and strong dependency to the level of significance $\alpha $, it is of interest to consider $\alpha $-free and continuous derivatives, namely shrinkage estimators. 

\noindent The preliminary test unbiased Liu estimator (PTE) has two extreme choices, namely, the $\hbUR$ and $\hbRE$. A compromise approach can be suggested by using the Stein-type shrinkage unbiased Liu estimator (S) of $\bbeta $ as
\begin{equation} \label{S} 
\hbbeta_{S} =\hbUR -\left(\hbUR-\hbRE \right)cL_{n}^{-1}  
\end{equation} 
where $c=q-2$.

\noindent The Stein-type estimator S will provide uniform improvement over UR, however it is not a convex combination of two extremes. This estimator has the disadvantage that the shrinkage factor $\left(1-cL_{n}^{-1} \right)$ becomes negative for $L_{n} <c$. Following \cite{arashi2014}, we define the positive-rule shrinkage unbiased Liu estimator (PS) of $\bbeta $  in the logistic regression model as follows 
\begin{eqnarray} \label{PS} 
\hbPS &=&\hbRE+\left(1-cL_{n}^{-1} \right)I\left(L_{n} >c\right)\left(\hbUR -\hbRE \right) \no \\
&=& \hbbeta_{S} - \left(\hbUR -\hbRE \right)\left(1-cL_{n}^{-1} \right)I\left(L_{n} < c\right).
\end{eqnarray} 
See \cite{saleh2006} for extensive study on shrinkage estimators. 

\section{Asymptotic Performance}
In this section, we provide the asymptotic properties of the proposed estimators. According to \cite{saleh2013}, to obtain proper discrimination between the asymptotic distributions of the estimators, we consider the local alternatives of form
\begin{equation} \label{3.1)} 
K_{\left(n\right)} :\bH\bbeta =\bh+\frac{\bgamma }{\sqrt{n} }  
\end{equation} 
where $\bgamma =\left(\gamma _{1} ,\ldots ,\gamma _{q} \right)^{\top } \in {\rm {\rm R}}^{q} $ is a fixed vector.

\noindent Under the local alternatives $\left\{K_{\left(n\right)} \right\}$ we have $L_{n} \mathop{\to }\limits^{D} \chi _{q}^{2} \left({\rm \Delta }^{2} \right)$ as $n\to \infty $, where ${\rm \bDelta }^{2} =\bgamma ^\top \left(\bH \bD^{-1} \bH^{\top}\right)^{-1} \bgamma$. 

In order to compare the estimators, we use the asymptotic distributional bias ($\mathcal{B}$), asymptotic covariance ($\bGamma$) and the asymptotic risk $\left( \mathcal{R} \right)$ expressions of the proposed estimators. 

Suppose $\tbbeta$ is an estimator of $\bbeta$. The asymptotic distributional bias of an estimator $\tbbeta$ is defined as
\begin{eqnarray*}\label{ADB}
\mathcal{B}\left( \tbbeta \right) =\mathbb{E} \underset{%
n\rightarrow \infty }{\lim }\left\{\sqrt{n}\left( \tbbeta -%
\bbeta \right) \right\}.
\end{eqnarray*}
Also, the asymptotic covariance of $\tbbeta$ is given as
\begin{eqnarray} \label{ADCov} 
\bGamma \left(\tbbeta \right)=\mathbb{E} \underset{%
n\rightarrow \infty }{\lim }\left\{n\left(\tbbeta -\bbeta \right)\left(\tbbeta -\bbeta \right)^{\top } \right\} .
\end{eqnarray} 
Moreover, the asymptotic risk of $\tbbeta$ is defined as
\begin{eqnarray*}
\mathcal{R}\left(\tbbeta\right)={\rm tr}\left\{\bW \bGamma \left(\tbbeta\right)\right\} 
\end{eqnarray*} 
where $\bW$ is a non-singular matrix and we consider that $\bW=\bI$ in this study. So we have \begin{eqnarray} \label{def_risk} 
\mathcal{R}\left(\tbbeta \right)={\rm tr}\left\{\bGamma \left(\tbbeta\right)\right\} 
\end{eqnarray}
where ${\rm tr}$ is the trace of a matrix.

\noindent For our purpose, we suppose the regularity conditions (A1-A3) hold.

\noindent Under the local alternatives $\left\{K_{\left(n\right)} \right\}$, using the fact that $\sqrt{n} \left(\hbbeta_{MLE} -\beta \right)\mathop{\to }\limits^{D} N_{p+1} \left(0,D^{-1} \right)$, as $n\to \infty $, we have the following important results which reveal the asymptotic distributions to be used in the study.
\begin{proposition}\label{prop_vector_dist} 
Let $\bvt_{1} = \sqrt{n}\left(\hbUR-%
\bbeta\right)$, $\bvt_{2} =\sqrt{n}\left( \hbRE-%
\bbeta \right)$ and $\bvt_{3} =\sqrt{n}\left( \hbUR-%
\hbRE \right)$. Under the regularity assumptions (A1-A3) and the local alternatives $\left\{ K_{(n)}\right\}$, as $n\rightarrow \infty$ we have the following joint distributions:

$$\left(
\begin{array}{c}
\bvt_{1} \\
\bvt_{3}%
\end{array}%
\right) \sim\mathcal{N}\left[ \left(
\begin{array}{c}
\bzeta \\
\bLd \bdelta%
\end{array}%
\right) ,\left(
\begin{array}{cc}
\bLd \bD^{-1} \bLd^\top & \bLd \bA \bLd^\top \\
\bLd \bA \bLd^\top & \bLd \bA \bLd^\top %
\end{array}%
\right) \right]\\,$$

$$\left(
\begin{array}{c}
\bvt_{2} \\
\bvt_{3}%
\end{array}%
\right) \sim\mathcal{N}\left[ \left(
\begin{array}{c}
\bzeta-\bLd\bdelta \\
\bLd\bdelta%
\end{array}%
\right) ,\left(
\begin{array}{cc}
\bLd \left( \bD^{-1} - \bA \right) \bLd^\top & \b0_{p+1} \\
\b0_{p+1} & \bLd \bA \bLd^\top%
\end{array}%
\right) \right],$$ \\
where $\bA = \bD^{-1}\left(\bH \bD^{-1}\bH^{\top} \right)^{-1}\bH \bD^{-1}$ and $\bdelta = \bD^{-1}\left(\bH \bD^{-1}\bH^{\top} \right)^{-1}\bgamma$ and $\bzeta = -(1-d) \left(\bD+\bI \right)^{-2}\bbeta$.
\end{proposition}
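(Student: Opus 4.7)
The plan is to reduce everything to the single asymptotic statement $\sqrt{n}(\hbMLE - \bbeta) \stackrel{D}{\to} \bZ \sim N_{p+1}(\b0, \bD^{-1})$ from \eqref{dist_MLE}, and then to compute the joint Gaussian law of the three affine images $\bvt_1, \bvt_2, \bvt_3$ by Slutsky's theorem together with algebraic manipulations that repeatedly use the identity $\bA\bD\bA = \bA$.

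First I would treat $\hbRMLE$. Substituting $\bh = \bH\bbeta - \bgamma/\sqrt{n}$ from $K_{(n)}$ into \eqref{1.5)} gives
\[
\sqrt{n}(\hbMLE - \hbRMLE) = \Bigl(\tfrac{1}{n}\bC\Bigr)^{-1}\bH^{\top}\Bigl[\bH\Bigl(\tfrac{1}{n}\bC\Bigr)^{-1}\bH^{\top}\Bigr]^{-1}\Bigl(\sqrt{n}\bH(\hbMLE - \bbeta) + \bgamma\Bigr),
\]
which by (A1), \eqref{dist_MLE} and the continuous mapping theorem converges jointly with $\sqrt{n}(\hbMLE - \bbeta)$ to $(\bA\bD\bZ + \bdelta,\ \bZ)$; in particular $\sqrt{n}(\hbRMLE - \bbeta) \stackrel{D}{\to} (\bI - \bA\bD)\bZ - \bdelta$. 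Multiplying by $\bL_d$ and adding the deterministic shift $\sqrt{n}(\bL_d - \bI)\bbeta$, identified with $\bzeta$, I obtain in the same joint limit
\[
\bvt_1 \stackrel{D}{\to} \bL_d \bZ + \bzeta, \qquad
\bvt_2 \stackrel{D}{\to} \bL_d(\bI - \bA\bD)\bZ - \bL_d \bdelta + \bzeta, \qquad
\bvt_3 \stackrel{D}{\to} \bL_d(\bA\bD \bZ + \bdelta),
\]
from which the means stated in the proposition are read off directly.

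For the covariance blocks the identity $\bA\bD\bA = \bA$ collapses everything. On the $(\bvt_1, \bvt_3)$ side, $\mathrm{Var}(\bvt_3) = \bL_d \bA\bD\bD^{-1}\bD\bA \bL_d^{\top} = \bL_d \bA \bL_d^{\top}$ and $\mathrm{Cov}(\bvt_1, \bvt_3) = \bL_d \bD^{-1}\bD\bA \bL_d^{\top} = \bL_d \bA \bL_d^{\top}$. On the $(\bvt_2, \bvt_3)$ side the decisive step is the vanishing cross-covariance
\[
\mathrm{Cov}(\bvt_2, \bvt_3) = \bL_d(\bI - \bA\bD)\bD^{-1}\bD\bA \bL_d^{\top} = \bL_d(\bA - \bA\bD\bA)\bL_d^{\top} = \b0,
\]
which upgrades to independence by joint Gaussianity; and $\mathrm{Var}(\bvt_2) = \bL_d(\bI - \bA\bD)\bD^{-1}(\bI - \bA\bD)^{\top}\bL_d^{\top}$ simplifies to $\bL_d(\bD^{-1} - \bA)\bL_d^{\top}$ via the same cancellation.

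The only delicate point is the deterministic shift $\sqrt{n}(\bL_d - \bI)\bbeta$: because $\bL_d$ is treated here as a non-random limit, this term is not of order $O(1)$ unless the biasing parameter is implicitly taken to depend on $n$ in the local-alternatives regime. I would state at the outset that this shift is identified with $\bzeta$ in accordance with the AULE asymptotics of \cite{chang}; once that convention is fixed the remainder of the argument is routine linear algebra on the joint Gaussian limit.
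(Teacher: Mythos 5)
Your argument is correct and reaches the same destination as the paper's proof, but it is organized differently and is in one respect more careful. The paper does not derive the joint law of $(\hbMLE,\hbMLE-\hbRMLE)$ from scratch: it imports the asymptotic normality of $\hbRMLE$ and the cross-covariance $Cov(\hbMLE,\hbRMLE)=\bD^{-1}-\bA$ from Saleh and Kibria (2013), and then computes each mean and covariance block separately (e.g.\ $Cov(\bvt_1,\bvt_3)=\bLd\bD^{-1}\bLd^\top-\bLd(\bD^{-1}-\bA)\bLd^\top$). You instead express $\sqrt{n}(\hbMLE-\hbRMLE)$ explicitly as an affine image of $\sqrt{n}(\hbMLE-\bbeta)$ under the local alternatives, so that all three statistics are affine functions of the single Gaussian limit $\bZ$ and every block follows from $\bA\bD\bA=\bA$; this buys a self-contained derivation (joint normality of all blocks is automatic rather than asserted) at the cost of a little more algebra. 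Your closing caveat is well taken and is in fact glossed over by the paper: the term $\sqrt{n}(\bL_d^{*}-\bI)\bbeta$ does not converge for fixed $d$, and the paper's proof of $\mathcal{B}(\hbUR)$ silently drops the $\sqrt{n}$ factor to arrive at $\bzeta=-(1-d)^2(\bD+\bI)^{-2}\bbeta$; your explicit ``identification'' convention is the honest way to state what is being done. Two small reconciliations with the printed statement: the proposition writes $\bzeta$ with $(1-d)$ rather than the $(1-d)^{2}$ that both you and the paper's own proof obtain, and it writes $\bA$ and $\bdelta$ without the factor $\bH^{\top}$ (which is dimensionally impossible); the forms you use, $\bA=\bD^{-1}\bH^{\top}(\bH\bD^{-1}\bH^{\top})^{-1}\bH\bD^{-1}$ and $\bdelta=\bD^{-1}\bH^{\top}(\bH\bD^{-1}\bH^{\top})^{-1}\bgamma$, are the correct ones appearing in the appendix.
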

\begin{proof}
See Appendix.
\end{proof}
\begin{theorem}
Under the assumed regularity conditions in (A1-A3) and the local alternatives $\left\{K_{(n)}\right\}$, and also using the results of the Proposition \ref{prop_vector_dist}, the expressions for asymptotic biases of the listed  estimators are obtained as:
\label{bias}
\begin{eqnarray*}
\mathcal{B}\left(\hbUR\right) &=& \bzeta\\
\mathcal{B}\left( \hbRE \right) &=& \bzeta-\bLd\bdelta \\
\mathcal{B}\left( \hbPT \right) &=& \bzeta-\bLd\bdelta \mathbb{H}_{q+2}\left( \chi_{q,\alpha}^{2};\Delta^2 \right) \\
\mathcal{B}\left( \hbS\right) &=&\bzeta-c\bLd\bdelta \mathbb{E}\left( \chi_{q+2}^{-2}\left(\Delta^2 \right)\right) \\
\mathcal{B}\left( \hbPS \right) &=&\bzeta-\bLd \bdelta\mathbb{E}\left( \chi _{q+2}^{-2}\left(\Delta^2\right) \right)-\bLd \bdelta \mathbb{H}_{q+2}\left( \chi _{q}^{2};\Delta^2 \right) \\
&&+c\bLd \bdelta \mathbb{E}\left( \chi _{q+2}^{-2}(\Delta^2)\textrm{I}\left(\chi _{q+2}^{2}(\Delta^2) < c\right) \right)
\end{eqnarray*}%
where $\mathbb{H}_{v}\left( x;\Delta^2 \right) $ is the cumulative distribution
function of the non-central chi-squared distribution with non-centrality
parameter $\Delta^2$ and $v$ degree of freedom, and for $i=1,2$  
\begin{equation} \label{3.10)} 
E\left[\chi _{q+2i}^{-2} \left({\rm \Delta }^{2} \right)\right]=E_{R} \left(\frac{1}{q+2i-2+2R} \right)=\exp \left(-\frac{{\rm \Delta }^{2} }{2} \right)\mathop{\sum }\limits_{r\ge 0} \frac{1}{{\rm \Gamma }\left(r+1\right)} \left(\frac{{\rm \Delta }^{2} }{2} \right)^{2} \frac{1}{q+2i-2+2r} \no, 
\end{equation} 
\begin{eqnarray} 
E\left[\chi _{q+2i}^{-2} \left({\rm \Delta }^{2} \right)I\left(\chi _{q+2i}^{2} \left({\rm \Delta }^{2} \right)<k\right)\right] &=& \exp \left(-\frac{{\rm \Delta }^{2} }{2} \right)\mathop{\sum }\limits_{r\ge 0} \frac{1}{{\rm \Gamma }\left(r+1\right)} \left(\frac{{\rm \Delta }^{2} }{2} \right)^{2} \frac{1}{q+2i-2+2r}\no \\
&& \times H_{q+2i-2+2r} \left(k;0\right) \no
\end{eqnarray} 
where $E_{R} \left(\cdot \right)$ stands for the expectation with respect to a Poisson variable $R$ with parameter ${\rm \Delta }^{2} /2$.
\end{theorem}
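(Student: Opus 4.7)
The plan is to express each estimator's scaled deviation $\sqrt{n}(\tbbeta-\bbeta)$ in terms of the random vectors $\bvt_1$ and $\bvt_3$ whose joint limiting distribution is supplied by Proposition~\ref{prop_vector_dist}, pass to the limit inside the expectation, and thereby reduce the problem to evaluating standard Gaussian expectations involving a quadratic form.

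For $\hbUR$ and $\hbRE$ the bias is read off Proposition~\ref{prop_vector_dist} directly: $\mathcal{B}(\hbUR)=\lim_n \E[\bvt_1]=\bzeta$ and $\mathcal{B}(\hbRE)=\lim_n \E[\bvt_2]=\bzeta-\bLd\bdelta$. For the three shrinkage-type estimators I would first establish the elementary algebraic identities
\begin{eqnarray*}
\sqrt{n}(\hbPT-\bbeta) &=& \bvt_1-\bvt_3\,I(L_n<\chi_{q,\alpha}^2), \\
\sqrt{n}(\hbS-\bbeta) &=& \bvt_1-c\,\bvt_3\,L_n^{-1}, \\
\sqrt{n}(\hbPS-\bbeta) &=& \sqrt{n}(\hbS-\bbeta)-\bvt_3\bigl(1-cL_n^{-1}\bigr)I(L_n<c),
\end{eqnarray*}
the last of which follows directly from the second representation of $\hbPS$ in (\ref{PS}). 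Taking expectations and passing to the limit then reduces everything to computing $\lim_n\E[\bvt_3\,\psi(L_n)]$ for the four choices $\psi(x)=I(x<\chi_{q,\alpha}^2)$, $\psi(x)=c/x$, $\psi(x)=I(x<c)$, and $\psi(x)=(c/x)I(x<c)$.

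The key technical input is a Judge--Bock-type identity for the limit joint Gaussian. Because $\bvt_3$ and $L_n$ are asymptotically continuous functions of the same Gaussian vector $Z=\lim_n\sqrt{n}(\bH\hbMLE-\bh)\sim\mathcal{N}(\bgamma,\bH\bD^{-1}\bH^{\top})$, writing $\bvt_3=\bLd\bD^{-1}\bH^{\top}(\bH\bD^{-1}\bH^{\top})^{-1}Z$ and $L_n\to Z^{\top}(\bH\bD^{-1}\bH^{\top})^{-1}Z$, and applying the whitening transformation $T=(\bH\bD^{-1}\bH^{\top})^{-1/2}Z\sim\mathcal{N}(\bnu,\bI_q)$ with $\|\bnu\|^2=\Delta^2$, the classical Stein-type identity $\E[T\,\psi(T^{\top}T)]=\bnu\,\E[\psi(\chi_{q+2}^2(\Delta^2))]$ (cf.\ \cite{saleh2006}) yields
\begin{eqnarray*}
\lim_{n\to\infty}\E\bigl[\bvt_3\,\psi(L_n)\bigr]=\bLd\,\bdelta\,\E\bigl[\psi\bigl(\chi_{q+2}^2(\Delta^2)\bigr)\bigr].
\end{eqnarray*}
Substituting the four choices of $\psi$ listed above and collecting terms produces the five stated bias expressions; the explicit Poisson-mixture representation at the end of the theorem is then just the series expansion of $\E[\chi_{q+2}^{-2}(\Delta^2)]$ obtained by conditioning on the Poisson mixing index of the noncentral chi-square.

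The main obstacle, as usual in this line of argument, is the interchange of limit and expectation, especially for $\hbS$ and $\hbPS$ where $L_n^{-1}$ is singular at the origin. I would handle this by a uniform-integrability argument: the limiting density of $L_n$ is that of $\chi_q^2(\Delta^2)$, which makes $x^{-1}$ integrable provided $q\geq 3$, i.e.\ $c=q-2>0$, which is precisely the standing requirement for the Stein-type construction. Combined with assumption (A1) and the non-degeneracy of $\bH\bD^{-1}\bH^{\top}$ implicit in the definitions of $\bA$ and $\bdelta$, this uniform integrability propagates back to the prelimit and legitimises the exchange of limit and expectation at each step.
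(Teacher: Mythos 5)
Your proposal is correct and follows essentially the same route as the paper: the same decompositions of $\sqrt{n}(\hbPT-\bbeta)$, $\sqrt{n}(\hbS-\bbeta)$ and $\sqrt{n}(\hbPS-\bbeta)$ into $\bvt_1$ plus a $\bvt_3\,\psi(L_n)$ term, reduced via the Judge--Bock identity (the paper's Lemma in the Appendix) to expectations of functions of noncentral chi-square variables. Your explicit whitening derivation of the key identity and the uniform-integrability justification for interchanging limit and expectation are details the paper leaves implicit, but they do not change the argument.
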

\begin{proof}
See Appendix.
\end{proof}

Now, we define the following asymptotic quadratic bias $\left(\mathcal{QB}\right)$ of an estimator $\hbbeta^*$ of the parameter vector $\bbeta$
by converting them into the quadratic form since the bias expression of all the estimators are not in the scalar form:
\begin{equation}\label{eq:asqubi}
\mathcal{QB}\left(\hbbeta^*\right)=\mathcal{B}\left(\hbbeta^*\right)^\top \mathcal{B}\left(\hbbeta^*\right).
\end{equation}
\begin{theorem}
Under the assumed regularity conditions in (A1-A3) and the local alternatives $\left\{K_{(n)}\right\}$, and also using the results of the Proposition \ref{prop_vector_dist}, the expressions for asymptotic quadratic biases of the listed  estimators are obtained as follows:
\begin{eqnarray*}
\mathcal{QB}\left(\hbUR\right) &=& (1-d)^2 \bbeta^\top \left(\bD+\bI \right)^{-4}\bbeta\\
\mathcal{QB}\left( \hbRE \right) &=& (1-d)^2 \bbeta^\top \left(\bD+\bI \right)^{-4}\bbeta-2\bzeta^\top\bLd\bdelta+\bdelta^\top \bLd^\top \bLd \bdelta \\
\mathcal{QB}\left( \hbPT \right) &=& (1-d)^2 \bbeta^\top \left(\bD+\bI \right)^{-4}\bbeta-2\bzeta^\top\bLd\bdelta\mathbb{H}_{q+2}\left( \chi_{q,\alpha}^{2};\Delta^2 \right)\\
&&+\bdelta^\top \bLd^\top \bLd \bdelta \left[ \mathbb{H}_{q+2}\left( \chi_{q,\alpha}^{2};\Delta^2 \right)\right]^2 \\
\mathcal{QB}\left( \hbS\right) &=&(1-d)^2 \bbeta^\top \left(\bD+\bI \right)^{-4}\bbeta-2\bzeta^\top\bLd\bdelta\mathbb{E}\left( \chi _{q+2}^{-2}\left(\Delta^2\right) \right) \\
&&+\bdelta^\top \bLd^\top \bLd \bdelta \left[ \mathbb{E}\left( \chi_{q+2}^{-2}\left(\Delta^2 \right)\right)\right]^2 \\
\mathcal{QB}\left( \hbPS \right) &=&(1-d)^2 \bbeta^\top \left(\bD+\bI \right)^{-4}\bbeta+2\bLd \bdelta f(\Delta)+\bLd \bdelta f(\Delta)^2 
\end{eqnarray*}%
where $f(\Delta)=c\mathbb{E}\left\{\chi _{q+2}^{-2}(\Delta^2) \textrm{I}\left(\chi_{q+2}^{2} (\Delta^2) < c \right) \right\}-\mathbb{E}\left\{\chi_{q+2}^{-2}\left(\Delta^2 \right)\right\}-\mathbb{H}_{q+2}\left( \chi _{q}^{2};\Delta^2 \right).$
\end{theorem}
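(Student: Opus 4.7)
The proof will be an immediate corollary of the previous theorem on asymptotic biases combined with the definition \eqref{eq:asqubi}. The plan is to write each bias in the compact form $\mathcal{B}(\hbbeta^*)=\bzeta+\kappa(\Delta^2)\bLd\bdelta$ for a scalar (possibly zero or involving indicator-weighted chi-square moments) $\kappa(\Delta^2)$ depending on the estimator, and then expand the inner product
\[
\mathcal{QB}(\hbbeta^*)=\bigl(\bzeta+\kappa(\Delta^2)\bLd\bdelta\bigr)^{\top}\bigl(\bzeta+\kappa(\Delta^2)\bLd\bdelta\bigr)=\bzeta^{\top}\bzeta+2\kappa(\Delta^2)\bzeta^{\top}\bLd\bdelta+\kappa(\Delta^2)^{2}\bdelta^{\top}\bLd^{\top}\bLd\bdelta.
\]
Since $\bzeta=-(1-d)(\bD+\bI)^{-2}\bbeta$, the leading term is always
\[
\bzeta^{\top}\bzeta=(1-d)^{2}\bbeta^{\top}(\bD+\bI)^{-4}\bbeta,
\]
which explains why this quantity appears identically in every displayed expression.

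First I would dispatch the two simplest cases. For $\hbUR$, Theorem \ref{bias} gives $\kappa\equiv0$, so $\mathcal{QB}(\hbUR)=\bzeta^{\top}\bzeta$, which is exactly the claimed formula. For $\hbRE$, we have $\kappa\equiv-1$, and the expansion produces the three claimed summands in a line. Next, for $\hbPT$ the scalar is $\kappa=-\mathbb{H}_{q+2}(\chi_{q,\alpha}^{2};\Delta^{2})$, and for $\hbS$ the scalar is $\kappa=-c\,\mathbb{E}[\chi_{q+2}^{-2}(\Delta^{2})]$. Substituting into the generic expansion above yields the stated expressions verbatim (the cross- and square-factors moving inside the chi-square quantities as displayed in the statement).

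The only case that requires any real organization is $\hbPS$. Here the bias from Theorem \ref{bias} packages into
\[
\mathcal{B}(\hbPS)=\bzeta+f(\Delta)\,\bLd\bdelta,\qquad f(\Delta)=c\,\mathbb{E}\!\left\{\chi_{q+2}^{-2}(\Delta^{2})\textrm{I}(\chi_{q+2}^{2}(\Delta^{2})<c)\right\}-\mathbb{E}\!\left\{\chi_{q+2}^{-2}(\Delta^{2})\right\}-\mathbb{H}_{q+2}(\chi_{q}^{2};\Delta^{2}),
\]
obtained by collecting the three $\bLd\bdelta$-proportional terms in the bias expression. Then the generic expansion furnishes the claimed quadratic bias with the $2f(\Delta)$ and $f(\Delta)^{2}$ coefficients.

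Overall there is no deep analytic obstacle: the work is reduced to algebraic squaring thanks to the fact that, under the local alternatives $\{K_{(n)}\}$, every bias is expressible in the common affine form $\bzeta+\kappa\bLd\bdelta$. The only delicate point I would take care with is bookkeeping in the $\hbPS$ case, where the three chi-square-expectation contributions must be grouped carefully into the single scalar $f(\Delta)$ before the squaring step; after that, the formula falls out by the same expansion used for the other four estimators, so the proof is a direct plug-in.
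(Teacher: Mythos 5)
Your proposal is correct and is exactly the argument the paper has in mind: the paper explicitly omits the proof as ``immediate from the previous Theorem,'' and your plug-in of each bias in the affine form $\bzeta+\kappa(\Delta^2)\bLd\bdelta$ followed by expansion of $\mathcal{B}^{\top}\mathcal{B}$ is that omitted computation. Your expansion in the $\hbPS$ case in fact yields the dimensionally correct scalars $2f(\Delta)\,\bzeta^{\top}\bLd\bdelta$ and $f(\Delta)^2\,\bdelta^{\top}\bLd^{\top}\bLd\bdelta$, whereas the displayed statement writes the vector $\bLd\bdelta$ in those terms, which is evidently a typographical slip in the paper rather than a defect of your argument.
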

We skip the proof of this theorem since it is immediate from the previous Theorem 2.2.

\begin{theorem}\label{risks}
Under the assumed regularity conditions in (A1-A3), the Proposition \ref{prop_vector_dist}, and the local alternatives $\left\{K_n\right\}$, the asymptotic risks for the estimators are computed as follows:
\begin{eqnarray*}
\mathcal{R}\left( \hbUR\right)&=&
\sum_{j=1}^{p+1} \left\{\frac{\left(\lambda _{j} +d\right)^{2} \left(\lambda _{j} +d-2\right)^{2} }{\lambda _{j} \left(\lambda _{j} +1\right)^{4} }+ \frac{\left(1-d\right)^{4} \theta _{i}^{2} }{\left(\lambda _{i} +1\right)^{4} }\right\}\\
\mathcal{R}\left( \hbRE \right)&=&\sum_{j=1}^{p+1} \left\{\frac{\left(\lambda_j+2-d\right)^2\left(\lambda_j+d\right)^2}{\left(\lambda_j+1\right)^4} \left[\frac{1-\lambda_j a_{jj}}{\lambda_j}+\delta_j^2 \right] + \frac{\left(1-d\right)^2}{\left(\lambda_j+1\right)^4}\left[ \beta_j^2 + 2\beta_j^2\delta_j \left(\lambda_j+2-d\right) \left(\lambda_j+d\right)\right] \right\} \\ 
\mathcal{R}\left( \hbPT \right)&=&\mathcal{R}\left( \hbUR \right)-2{\rm tr}\left[\bL_d \bA \bL_d^\top\right]\H_{q+2}\left( \chi_{q,\alpha}^{2};\Delta^2 \right)-2{\rm tr}\left[\bL_d\bdelta\bzeta^{\top}\right]\H_{q+2}\left( \chi_{q,\alpha}^{2};\Delta^2 \right)\no\\
&&+{\rm tr}\left[\bL_d\bdelta\bdelta^{\top}\bL_d^\top\right] \left[2\H_{q+2}\left( \chi_{q,\alpha}^{2};\Delta^2 \right)-\H_{q+4}\left( \chi_{q+4}^{2};\Delta^2 \right)\right]\\
\mathcal{R}\left( \hbS\right)&=&
\mathcal{R}\left( \hbUR \right)+ {\rm tr}\left[\bLd \bA \bLd^\top\right] \left( c^2\E\left\{ \chi_{q+2}^{-4}\left(\Delta^2\right) \right\}-2c\E\left\{\chi_{q+2}^{-2}\left(\Delta^2\right)\right\}\right) \no\\
&&+{\rm tr}\left[\bLd \bdelta \bdelta^\top \bLd^\top \right] \left(c^2\E\left\{ \chi_{q+4}^{-4}\left(\Delta^2\right) \right\}+2c\E\left\{\chi_{q+2}^{2}\left(\Delta^2\right) \right\}-2c\E\left\{\chi_{q+4}^{-2}\left(\Delta^2\right) \right\}  \right) \no \\
&&-2c{\rm tr}\left[ \bLd \bdelta \bzeta^\top\right]\E\left\{ \chi_{q+2}^{2}\left(\Delta^2\right) \right\}\\
\mathcal{R}\left( \hbPS\right)&=&
\mathcal{R}\left( \hbS\right) \no\\
  &&+ {\rm tr}\left[ \bLd \bA \bLd^\top \right]\left[ \H_{q+2}\left( c;\Delta^2 \right)-c^2 \E\left\{ \left(\chi_{q+2}^{-4}(\Delta) \right)\tI \left(\chi_{q+2}^{-2}(\Delta)  < c \right)\right\} \no \right.\\
  &&\left.-2\E\left\{ \left(1-c\chi_{q+2}^{-2}(\Delta^2) \right)\tI \left(\chi_{q+2}^{-2}(\Delta)  < c \right)\right\}\right] \no\\
  &&+ {\rm tr}\left[ \bLd \bdelta \bdelta^\top \bLd^\top \right] \left[\H_{q+2}\left( c;\Delta^2 \right)-c^2\E\left\{ \left(\chi_{q+4}^{-4}(\Delta^2) \right)\tI \left(\chi_{q+2}^{-2}(\Delta^2)  < c \right)\right\}  \no \right.\\
  &&\left. +2c\E\left\{ \left(1-c\chi_{q+2}^{-2}(\Delta^2) \right)\tI \left(\chi_{q+2}^{-2}(\Delta^2)  < c \right)\right\} -2\E\left\{ \left(1-c\chi_{q+4}^{-2}(\Delta^2) \right)\tI \left(\chi_{q+4}^{-2}(\Delta^2)  < c \right)\right\}\right] \no\\
 && -2{\rm tr}\left[ \bLd \bdelta \bzeta^\top \right]\E\left\{ \left(1-c\chi_{q+4}^{-2}(\Delta^2) \right)\tI \left(\chi_{q+4}^{-2}(\Delta^2)  < c \right)\right\}
\end{eqnarray*}
where
\begin{eqnarray}
E\left[\chi _{q+2i}^{-4} \left({\rm \Delta }^{2} \right)\right]&=&E_{R} \left(\frac{1}{\left(q+2i-2+2R\right)\left(q+2i-4+2R\right)} \right)\nonumber \\ &=&\exp \left(-\frac{{\rm \Delta }^{2} }{2} \right)\mathop{\sum }\limits_{r\ge 0} \frac{1}{{\rm \Gamma }\left(r+1\right)} \left(\frac{{\rm \Delta }^{2} }{2} \right)^{2} \frac{1}{\left(q+2i-2+2r\right)\left(q+2i-4+2r\right)} \no, 
\end{eqnarray} 
and  
\begin{eqnarray} 
E\left[\chi _{q+2i}^{-4} \left({\rm \Delta }^{2} \right)I\left(\chi _{q+2i}^{2} \left({\rm \Delta }^{2} \right)<k\right)\right] &=& \exp \left(-\frac{{\rm \Delta }^{2} }{2} \right)\mathop{\sum }\limits_{r\ge 0} \frac{1}{{\rm \Gamma }\left(r+1\right)} \left(\frac{{\rm \Delta }^{2} }{2} \right)^{2} \no \\ 
&& \times \frac{1}{\left(q+2i-2+2r\right)\left(q+2i-4+2r\right)}  H_{q+2i-4+2r} \left(k;0\right) \no. 
\end{eqnarray} 
for $i=1,2$.
\end{theorem}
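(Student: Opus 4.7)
The plan is to compute the asymptotic covariance $\bGamma(\tilde{\bbeta})$ for each of the five estimators and then take its trace. All calculations reduce to moments of the joint limit laws in Proposition~\ref{prop_vector_dist}, and the key structural observation I will exploit is that the block-diagonal covariance between $\bvt_2$ and $\bvt_3$ forces asymptotic independence of $\bvt_2$ from any measurable function of $\bvt_3$; in particular $L_n$ is, in the limit, a continuous function of $\bvt_3$ alone, so $\bvt_2 \perp L_n$.

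For $\hbUR$ and $\hbRE$ the risk is immediate from $\bGamma(\tilde{\bbeta}) = \mathrm{Cov}(\bvt) + \E[\bvt]\E[\bvt]^\top$: the trace of the covariance block in the proposition, plus $\bzeta^\top \bzeta$ (respectively plus $(\bzeta - \bLd\bdelta)^\top(\bzeta - \bLd\bdelta)$), recovers the stated expressions once $\bD$ is diagonalised as $\bD = \bPi \bLambda \bPi^\top$ with $\bLambda = \mathrm{diag}(\lambda_1,\ldots,\lambda_{p+1})$. The identity $(\lambda_j+1)^2 - (1-d)^2 = (\lambda_j+d)(\lambda_j+2-d)$ then produces the $(\lambda_j+d)^2(\lambda_j+2-d)^2/[\lambda_j(\lambda_j+1)^4]$ diagonal entries, and the remaining quadratic terms in $\bbeta$ and $\bdelta$ come from the bias and cross contributions.

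For the three shrinkage-type estimators I would start from
\begin{align*}
\sqrt{n}(\hbPT - \bbeta) &= \bvt_1 - \bvt_3 \, I(L_n < \chi_{q,\alpha}^{2}),\\
\sqrt{n}(\hbS - \bbeta) &= \bvt_1 - c\, L_n^{-1}\, \bvt_3,\\
\sqrt{n}(\hbPS - \bbeta) &= \sqrt{n}(\hbS - \bbeta) - \bvt_3 (1 - cL_n^{-1}) I(L_n < c),
\end{align*}
expand the outer products, and reduce each cross-moment to the generic form $\E[\bvt_1 \bvt_3^\top \phi(L_n)]$ or $\E[\bvt_3 \bvt_3^\top \phi(L_n)]$ for an appropriate scalar weight $\phi$. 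Writing $\bvt_1 = \bvt_2 + \bvt_3$ and invoking $\bvt_2 \perp (\bvt_3, L_n)$, the first family splits as $(\bzeta-\bLd\bdelta)\,\E[\phi(L_n)\bvt_3^\top] + \E[\bvt_3 \bvt_3^\top \phi(L_n)]$, so every quantity reduces to moments of $\bvt_3$ weighted by $\phi(L_n)$.

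The main work, and the main obstacle, is evaluating those weighted moments via the Judge--Bock identities: for $\bvt_3 \sim \mathcal{N}(\bLd\bdelta, \bLd\bA\bLd^\top)$ one has $\E[\bvt_3 \phi(L_n)] = \bLd\bdelta\, \E[\phi(\chi_{q+2}^{2}(\Delta^2))]$ and $\E[\bvt_3 \bvt_3^\top \phi(L_n)] = \bLd\bA\bLd^\top\, \E[\phi(\chi_{q+2}^{2}(\Delta^2))] + \bLd\bdelta\bdelta^\top\bLd^\top\, \E[\phi(\chi_{q+4}^{2}(\Delta^2))]$. Each weight $\phi \in \{I(\cdot < \chi_{q,\alpha}^{2}),\ cL_n^{-1},\ c^2L_n^{-2},\ (1-cL_n^{-1})I(\cdot<c)\}$ then produces either an $\mathbb{H}_{q+2i}(\cdot;\Delta^2)$ factor or an $\E[\chi_{q+2i}^{-2}(\Delta^2)]$ or $\E[\chi_{q+2i}^{-4}(\Delta^2)]$ factor, with the shift $i=1$ attached to the covariance piece $\bLd\bA\bLd^\top$ and $i=2$ attached to the mean-squared piece $\bLd\bdelta\bdelta^\top\bLd^\top$. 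Tracking these degree-of-freedom shifts across all three shrinkage estimators --- and in particular for $\hbPS$, where the $(1-cL_n^{-1})I(L_n<c)$ correction must be added to $\hbS$ without double-counting the truncated second-moment contributions --- is the delicate part of the bookkeeping; once that is done cleanly, grouping terms and substituting the expectation formulas stated at the end of the theorem reproduces the five displayed risk expressions.
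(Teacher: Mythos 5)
Your proposal is correct and follows essentially the same route as the paper: compute each $\bGamma(\tbbeta)$ from the joint laws of $(\bvt_1,\bvt_3)$ and $(\bvt_2,\bvt_3)$, reduce every cross term to weighted moments of $\bvt_3$ via the Judge--Bock identities with the $q+2$/$q+4$ degree-of-freedom split, and take traces after diagonalising $\bD$. Your independence argument ($\bvt_1=\bvt_2+\bvt_3$ with $\bvt_2\perp(\bvt_3,L_n)$) is exactly the content of the paper's conditioning step $\E[\bvt_1^\top\varphi(L_n)\mid\bvt_3]=[\bzeta+\bvt_3-\bLd\bdelta]^\top\varphi(L_n)$, so the two derivations coincide.
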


\noindent In the forthcoming section, to obtain better perception about the asymptotic performance of the estimators, we compare the MSE performance of the proposed estimators graphically. Indeed, we want to see how estimators are compared using the results of proposed theorems.

\section{Monte Carlo Simulation Study}

Our simulation is based on a logistic regression model with sample size n = 250. A binary response is generated from the Bernoulli distribution $Be(\bP)$ such that
\begin{equation*}
\bP_i =\frac{exp\left(\bx_i'\bbeta\right)}{1+exp\left(\bx_i'\bbeta\right)}, i=1,\dots,n,
\end{equation*}%
where $\bP_i =P(Y=1|\bx_i)$ and the predictor values $\bx_i$ are drown from the standard normal distribution with the correlation between the $j$th and $k$th components of $\bX$ equals to $0.5^{|j-k|}$.  $\bbeta=(\bbeta'_1,\bbeta'_2)'$ with $\bbeta_1=(1.5,2.5)'$ and $\bbeta_2=(\mathbf{0}_{q})'$. We consider the candidate submodel $H_0 : \bH\bbeta =\mathbf{0}$ where the first $2$ columns of $\bH$ are zeros and the $q \times q$ submatrix of $\bH$ is the identity. We define a distance between the simulation model and the candidate subspace model by $\Delta^*=||\bbeta-\bbeta_o||^2$ where $\bbeta_o=(\bbeta'_1,\mathbf{0}'_{q})'$ is the true parameter in the simulation model and $||\cdot||$ is the Euclidean norm.

\begin{figure}[!htbp]
\centering
   \includegraphics[height=24cm,width=14cm]{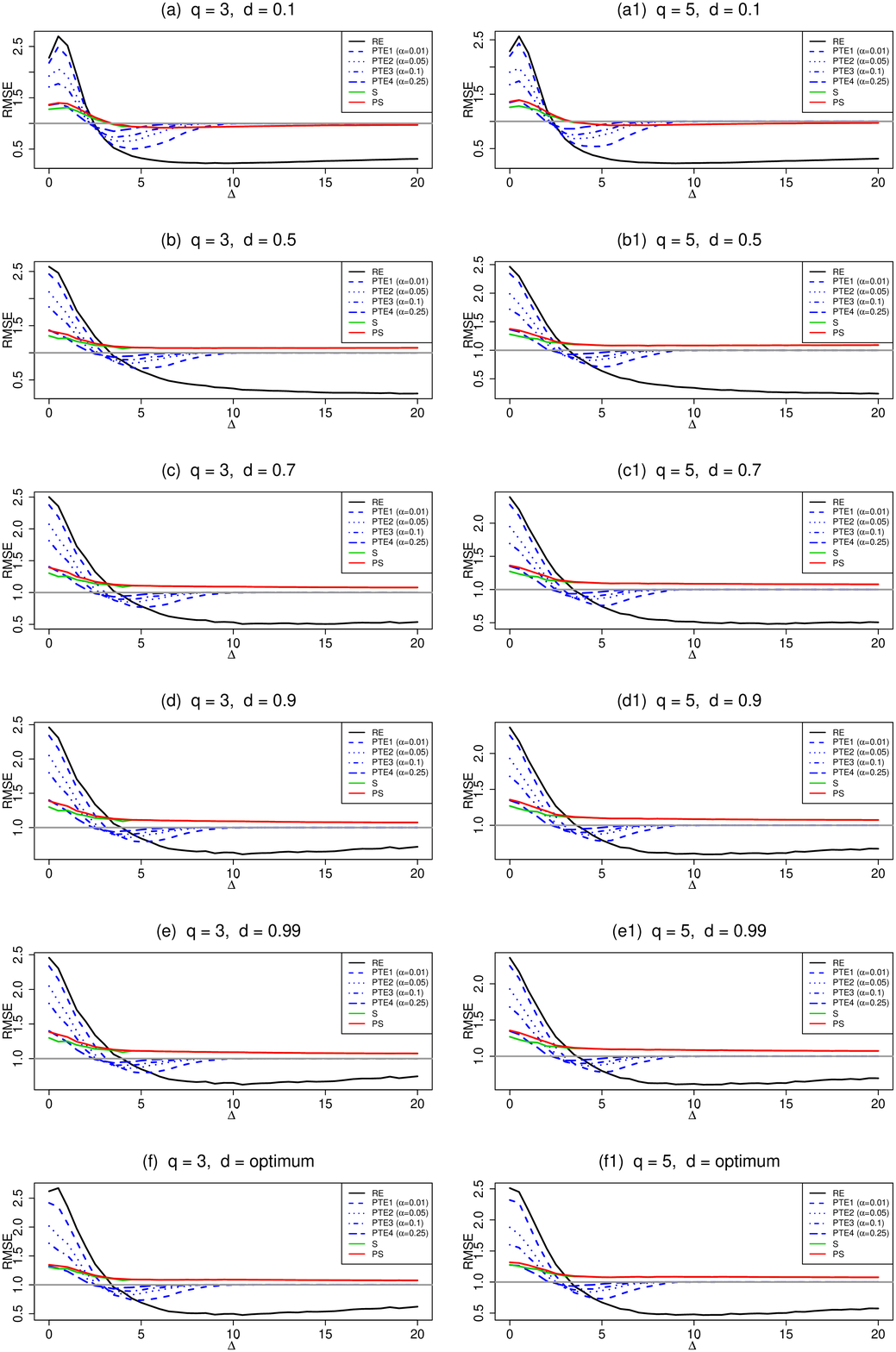}
   \caption{RMSE}
   \label{fig:rmse} 
\end{figure}

$\Delta$ changes between $0$ and $20$. $q$ is taken to be $3$ and $5$. The biasing parameter $d$ is considered to be $0.1, 0.5, 0.7, 0.9$ and $0.99$. Following \cite{alheety} and \cite{wu2017}, we also use the following estimator of $d$:
\begin{eqnarray}
d_{optimum} = 1 - \sqrt{\left(\frac{\sum_{j=1}^p\frac{1}{\lambda_j(\lambda_j+1)}}{\sum_{j=1}^p\frac{1+\lambda_j \th_j^2}{\lambda_j(\lambda_j+1)^4}}\right)}
\end{eqnarray}
where $\th_j$ is the jth element of $\hat{\btheta} ={\rm \bQ}^\top \hbMLE$ and $\bQ$ is the orthogonal matrix whose columns consist of eigenvectors of $\bD$.
The simulations are repeated 2000 times. For each replication, we compute the simulated MSE values of each estimator $\bbeta^*$ and we obtain the average MSE value using  
$$MSE\left(\bbeta^*\right)=\sum_{r=1}^{2000}\left(\bbeta^*-\bbeta \right)_r^\top \left(\bbeta^*-\bbeta \right)_r.$$
We consider the simulated relative mean squared error (RMSE) of each estimator $\bbeta^*$ as follows:
$$ RMSE\left(\bbeta^*\right) = \frac{MSE\left(\hbUR\right)}{MSE\left(\bbeta^*\right)}$$
All the computations are performed using R Statistical Package Program \cite{R2010}. The results of the simulation is summarized in Figure ~\ref{fig:rmse}.  The findings of Figure ~\ref{fig:rmse} can be summarized as follows:
\begin{enumerate} \setlength{\itemsep}{0pt} \setlength{\parskip}{0pt}

   \item [a)] When the null hypothesis is true, i.e., $\Delta^{\ast}=0$, the performance of the RE is the best. On the other hand, the RMSE of the RE slightly decreases and approaches to zero while the null hypothesis is violated. However, it mostly depends on the biasing parameter $d$. For example, if $d=0.1$, then the RMSE of the RE interestingly is not the best when $\Delta^{\ast}=0$. But, it peaks a small amount violation of the null hypothesis, after that it decreases and approaches to zero. if $d$ is relative large, say $0.9$, then the RMSE of the RE may increase while $\Delta^{\ast}$ is larger than 10. But, it still remains below to the line one when $\Delta^{\ast}$ is enough large.

   \item[b)] We investigate the performance of PTE in four aspects: $\alpha = 0.01, 0.05, 0.10, 0.25$.
In summary, if $\alpha$ is smaller, then the PTE performs better when the $\Delta^{\ast}$ is between $0$ and around $3$. For intermediate values of $\Delta^{\ast}$, the RMSE of PTE becomes worser than the UR. Finally, the RMSE of the PTE goes to one when $\Delta^{\ast}$ is large.

   \item[c)] It can be shown that the performance of PS outshines the shrinkage estimation for all values of $\Delta^{\ast}$. This is also consistent with our theory. Moreover, the RMSE of PS is only better than PTE with $\alpha=0.25$ when we assume the null hypothesis is true. On the other hand, the RMSE of the PS decreases gradually and approaches to one when the alternative hypothesis is true.

      \item[d)] We also examine that the performance of pretest and shrinkage estimations perform better when the number of "nuisance parameter" is large.

\end{enumerate}

\section{Real Data Application}

We consider the heart disease data which was also analyzed by \cite{hossain}
 and \cite{hastie}. There are 462 individuals in this dataset.  The dependent variable is an indicator variable showing that whether the individual has a coronary heart disease (chd) or not. The affecting factors are  systolic blood pressure (sbp), cumulative tobacco in kg (tobocco), low density lipoprotein cholesterol (ldl), adiposity,  family history of heart disease, a factor with levels Absent or Present (famhist),  type-A behavior (typea),  obesity,  current alcohol consumption  (alcohol)  and age.
 
 \begin{table}[ht]
\caption{Cross-validated MSE values of estimators}
\centering
\label{app}
\begin{tabular}{rrrrrrr}
  \hline
 & 0.1 & 0.5 & 0.7 & 0.9 & 0.99 & $d_{optimum}$ \\ 
  \hline
UR & 120.1662 & 132.2195 & 136.0768 & 138.0744 & 138.3248 & 135.6160 \\ 
  RE & 114.3332 & 127.1294 & 131.2303 & 133.3549 & 133.6213 & 130.7096 \\ 
  PTE1 & 114.3332 & 127.1294 & 131.2303 & 133.3549 & 133.6213 & 130.7096 \\ 
  PTE2 & 114.3498 & 127.1451 & 131.2457 & 133.3702 & 133.6365 & 130.7252 \\ 
  PTE3 & 114.4550 & 127.2421 & 131.3400 & 133.4631 & 133.7292 & 130.8204 \\ 
  PTE4 & 115.4537 & 128.1537 & 132.2234 & 134.3318 & 134.5961 & 131.7128 \\ 
  S & 116.7602 & 129.2675 & 133.2735 & 135.3485 & 135.6087 & 132.7791 \\ 
  PS & 116.7241 & 129.2121 & 133.2118 & 135.2836 & 135.5434 & 132.7184 \\ 
   \hline
\end{tabular}
\end{table}
 
Since there is no prior information, we follow \cite{hossain} and consider that the  candidate subspace is $$\bbeta_2 = \left(\text{sbp, adiposity, obesity, alcohol} \right)=(0,0,0,0).$$
We use 10-fold cross-validation  and  compute the  MSE  of each estimator and repeat this procedure 500 times. Finally, we compute the average MSE values and report them in Table ~\ref{app} for different values of the parameter $d$. According to Table  ~\ref{app}, RE has the best performance which it has the smallest MSE value and PTEs follow it. Shrinkage and its positive perform well compare to the full model estimators. All the shrinkage and preliminary test estimators has lower MSE values than UR which is also satisfied with the results of simulation and theory.

\section{Conclusion}
This paper introduced the preliminary test almost unbiased Liu, Stein-type shrinkage almost unbiased Liu and positive-rule Stein-type shrinkage almost unbiased Liu estimators in Logistic Regression model, to provide improvement upon the recent approach of \cite{chang}. We implemented a sub-space restriction on the parameter $\bbeta$ to propose improved estimation strategies. Asymptotic distributional bias and quadratic risk of the estimators are exactly given and numerical comparisons provided. We further considered the application of proposed estimators in a real data example. Numerical results confirm that the proposed positive-rule Stein-type shrinkage almost unbiased Liu estimator, which is a derivative of the preliminary one, performs significantly better than all others. Similar conclusions to \cite{saleh2013} and \cite{arashi2014} are obtained. The availability of these results in this paper should stimulate research and applications.

\clearpage
\section*{Appendix}
\begin{lemma} \label{lem_JB}
Let $\bX$ be $q-$dimensional normal vector distributed as $%
N\left( \boldsymbol{\mu }_{x},\boldsymbol{\Sigma }%
_{q}\right) ,$ then, for a measurable function of of $\varphi ,$ we have
\begin{align*}
\mathbb{E}\left[ \bX\varphi \left( \bX^{\top}\bX%
\right) \right] =&\boldsymbol{\mu }_{x}\mathbb{E}\left[ \varphi \chi _{q+2}^{2}\left(
\Delta \right) \right] \\
\mathbb{E}\left[ \boldsymbol{XX}^{\top}\varphi \left( \bX^{\top}%
\bX\right) \right] =&\boldsymbol{\Sigma }_{q}\mathbb{E}\left[ \varphi
\chi _{q+2}^{2}\left( \Delta \right) \right] +\boldsymbol{\mu }_{x}\boldsymbol{\mu }_{x}^{\top}\mathbb{E}\left[ \varphi \chi
_{q+4}^{2}\left( \Delta \right) \right]
\end{align*}
where $\chi_{v}^{2}\left( \Delta \right)$ is a non-central chi-square distribution with $v$ degrees of freedom and non-centrality parameter $\Delta$.
\end{lemma}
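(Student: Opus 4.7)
The plan is to prove the Judge–Bock identities by combining the Poisson mixture representation of the noncentral chi-square density with a Stein-type differentiation argument on the Gaussian density. For clarity I will treat the case $\bSigma_q = \bI_q$, which is the relevant setting after whitening and suffices for the downstream use in Theorems~\ref{bias} and~\ref{risks}; the general case adds no conceptual difficulty.

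The key tool is the mixture representation
$$g_q(y;\Delta) = \sum_{k \ge 0} e^{-\Delta/2}\,\frac{(\Delta/2)^k}{k!}\,g_{q+2k}(y),$$
where $g_q(\cdot;\Delta)$ is the density of $\chi_q^2(\Delta)$ and $g_n(\cdot)$ is the central $\chi_n^2$ density. Differentiating term-by-term in $\Delta$ and reindexing $k \mapsto k-1$ yields the shift identity $\partial_\Delta g_q(y;\Delta) = \tfrac12[g_{q+2}(y;\Delta) - g_q(y;\Delta)]$. This is the only mechanism by which the degrees of freedom get raised, and I will use it once for the first identity and twice for the second.

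For the first identity I would observe the Stein-type relation $\bx\,f(\bx;\bmu_x) = \bmu_x f(\bx;\bmu_x) + \nabla_{\bmu_x} f(\bx;\bmu_x)$, which is immediate from $\nabla_{\bmu_x} f = (\bx-\bmu_x)f$. Integrating against $\varphi(\bx^{\top}\bx)$ and pulling $\nabla_{\bmu_x}$ outside the integral (justified by dominated convergence under mild growth of $\varphi$) gives $\E[\bX\,\varphi(\bX^{\top}\bX)] = \bmu_x\,\E[\varphi(\chi_q^2(\Delta))] + \nabla_{\bmu_x}\E[\varphi(\chi_q^2(\Delta))]$ with $\Delta = \|\bmu_x\|^2$. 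The chain rule $\nabla_{\bmu_x} = 2\bmu_x\,\partial_\Delta$ followed by the shift identity converts the second term into $\bmu_x[\E[\varphi(\chi_{q+2}^2(\Delta))] - \E[\varphi(\chi_q^2(\Delta))]]$, so the two $\chi_q^2$ contributions cancel and only $\bmu_x\,\E[\varphi(\chi_{q+2}^2(\Delta))]$ survives.

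For the second identity I would repeat the computation at second order. Writing $\bx\bx^\top = \bmu_x\bmu_x^\top + \bmu_x(\bx-\bmu_x)^\top + (\bx-\bmu_x)\bmu_x^\top + (\bx-\bmu_x)(\bx-\bmu_x)^\top$, the two cross terms each contribute a first-order derivative handled exactly as above, while the quadratic term is attacked by the tensor identity $\nabla_{\bmu_x}\otimes\nabla_{\bmu_x} f = [(\bx-\bmu_x)(\bx-\bmu_x)^\top - \bI_q]\,f$. Integrating against $\varphi(\bx^\top\bx)$, the $-\bI_q$ picked up must cancel against the surplus generated by applying the shift identity twice, namely $\partial_\Delta^2 g_q = \tfrac14[g_{q+4} - 2g_{q+2} + g_q]$. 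The hard part will be exactly this bookkeeping: after collecting coefficients of $\E[\varphi(\chi_q^2)]$, $\E[\varphi(\chi_{q+2}^2)]$ and $\E[\varphi(\chi_{q+4}^2)]$, one has to verify that the terms telescope so that only $\bI_q\,\E[\varphi(\chi_{q+2}^2(\Delta))] + \bmu_x\bmu_x^\top\,\E[\varphi(\chi_{q+4}^2(\Delta))]$ remains. A purely technical secondary concern is the interchange of derivative and integral, which is routine under polynomial growth of $\varphi$, a hypothesis satisfied by all the functionals of the form $\varphi(u) = u^{-1}\,\textrm{I}\{u<c\}$ and $\varphi(u) = \textrm{I}\{u<\chi^2_{q,\alpha}\}$ that are invoked in the downstream theorems.
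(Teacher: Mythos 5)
Your derivation is correct, but it is genuinely different from what the paper does: the paper offers no proof at all for Lemma \ref{lem_JB}, disposing of it with a one-line citation to Judge and Bock (1978), where these are standard pre-test/Stein identities. Your route --- the Stein relation $\bx f(\bx;\bmu_x)=\bmu_x f+\nabla_{\bmu_x}f$, the chain rule $\nabla_{\bmu_x}=2\bmu_x\partial_\Delta$, and the shift identity $\partial_\Delta \mathbb{E}[\varphi(\chi_q^2(\Delta))]=\tfrac12\{\mathbb{E}[\varphi(\chi_{q+2}^2(\Delta))]-\mathbb{E}[\varphi(\chi_q^2(\Delta))]\}$ from the Poisson mixture, applied once for the first identity and twice (via $\nabla_{\bmu_x}\nabla_{\bmu_x}^{\top}f=[(\bx-\bmu_x)(\bx-\bmu_x)^{\top}-\bI_q]f$) for the second --- telescopes exactly as you claim; I have checked that the coefficients of $\mathbb{E}[\varphi(\chi_q^2)]$ cancel and only $\bI_q\,\mathbb{E}[\varphi(\chi_{q+2}^2)]+\bmu_x\bmu_x^{\top}\mathbb{E}[\varphi(\chi_{q+4}^2)]$ survives. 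What your approach buys is a self-contained, verifiable proof in place of an external reference, and it makes transparent \emph{why} the degrees of freedom climb by $2$ per power of $\bX$. Two caveats worth flagging. First, your reduction to $\bSigma_q=\bI_q$ is not merely a convenience: for general $\bSigma_q$ the quantity $\bX^{\top}\bX$ is not a non-central chi-square, so the lemma as stated in the paper is only literally correct in the identity-covariance (whitened) form you prove; the general-$\bSigma_q$ phrasing is the paper's imprecision, not yours. Second, your justification of the derivative--integral interchange via ``polynomial growth of $\varphi$'' does not literally cover $\varphi(u)=u^{-1}\mathrm{I}\{u<c\}$, which is unbounded near $u=0$; what you actually need is integrability of $\varphi$ against the central $\chi_{q+2k}^2$ densities, which holds for $q>2$ --- precisely the regime in which $c=q-2>0$ makes the downstream shrinkage estimators meaningful.
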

\begin{proof} It can be found in \cite{judge}.\end{proof}

\begin{proof}[Proof of Proposition \ref{prop_vector_dist}]
It is easy to obtain the asymptotic bias of UR as follows:
\begin{eqnarray*}
\mathcal{B}\left(\hbUR \right)&=&\mathbb{E}\left\{\underset{%
n\rightarrow \infty }{\lim }\sqrt{n}\left( \hbUR-\bbeta \right)\right\} \\
&=& \mathbb{E}\left\{\underset{%
n\rightarrow \infty }{\lim }\sqrt{n}\left( \bLd \hbMLE -\bbeta\right)\right\} \\
&=& \mathbb{E}\left\{\underset{%
n\rightarrow \infty }{\lim }\sqrt{n}\left[\bI- (1-d)^2 \left(\bD+\bI \right)^{-2} \right]\hbMLE-\bbeta\right\} \\
&=& \mathbb{E}\left\{\underset{%
n\rightarrow \infty }{\lim }\sqrt{n}\left[\left(\hbMLE-\bbeta \right) -(1-d)^2 \left(\bD+\bI \right)^{-2} \right]\hbMLE\right\} \\
&=& -(1-d)^2 \left(\bD+\bI \right)^{-2} \bbeta \\
&=& \bzeta
\end{eqnarray*}
and also using the definition of asymptotic covariance and making use of Equation (2.12) given in \cite{saleh2013}, we get the following
\begin{eqnarray} \label{cov_UR}
\boldsymbol{\Gamma} \left( \hbUR \right) &=&\mathbb{E}\left\{
\underset{n\rightarrow \infty }{\lim }{n}\left( \hbUR -\bbeta \right)\left( \hbUR -\bbeta \right) ^{\top}\right\} \no \\
&=&\mathbb{E}\left( \bvt_{1}\bvt_{1}^{\top}\right) \no \\
&=& Cov\left( \bvt_{1}\bvt_{1}^{\top}\right)+\mathbb{E}\left( \bvt_{1}\right) \mathbb{E}\left( \bvt_{1}^{\top}\right) \no \\
&=&\bLd \bD^{-1} \bLd^\top + \bzeta\bzeta^\top
\end{eqnarray}

Thus, $\bvt_{1}  \sim\mathcal{N} \left(\bzeta, \bLd \bD^{-1} \bLd^\top \right).$ \\
In \cite{saleh2013}, it is given that the asymptotic distribution of RMLE is $\hbRMLE \sim\mathcal{N} \left(-\bdelta, \bD^{-1}-\bA \right)$ where $\bA = \bD^{-1}\bH^{\top}\left(\bH \bD^{-1}\bH^{\top} \right)^{-1}\bH \bD^{-1}$ and $\bdelta = \bD^{-1} \bH^{\top}\left(\bH \bD^{-1} \bH^{\top} \right)^{-1} \bgamma$. Thus, similarly, we obtain
\begin{eqnarray*}
\mathcal{B}\left( \hbRE \right)&=&\mathbb{E}\left(\bvt_2\right) \\
&=&\mathbb{E}\left\{ \underset{n\rightarrow \infty }{\lim }\sqrt{n}\left( \hbRE- \bbeta \right) \right\} \\
&=&\mathbb{E}\left\{ \underset{n\rightarrow \infty }{\lim }\sqrt{n}\left( \bLd \hbRMLE- \bbeta \right) \right\} \\
&=&\mathbb{E}\left\{ \underset{n\rightarrow \infty }{\lim }\sqrt{n}\left(\left[ \hbUR-\bbeta \right] -\bLd \bD^{-1}\bH^{\top}\left(\bH \bD^{-1} \bH^{\top} \right)^{-1} \left( \bH \hbMLE - \bh \right) \right) \right\} \\
&=&\bzeta-\bLd \bdelta.
\end{eqnarray*} 

Now, we need to compute $\bGamma \left( \hbRE \right)$ which is as follows: 
\begin{eqnarray}\label{cov_RE}
\bGamma \left( \hbRE \right) &=&\mathbb{E}\left\{
\underset{n\rightarrow \infty }{\lim }{n}\left( \hbRE -\bbeta \right)\left(\hbRE -\bbeta \right) ^{\top}\right\} \no \\
&=&\mathbb{E}\left( \bvt_{2}\bvt_{2}^{\top}\right) \no \\
&=& \boldsymbol{Cov}\left( \bvt_{2}\bvt_{2}^{\top}\right) +\mathbb{E}\left( \bvt_{2}\right) \mathbb{E}\left( \bvt_{2}^{\top}\right) \no \\
&=& \bLd \left( \bD^{-1}-\bA \right) \bLd^\top + \left( \bzeta - \bLd\bdelta\right)\left( \bzeta - \bLd\bdelta\right)^{\top}\no \\
&=& \bLd \left( \bD^{-1}-\bA \right) \bLd^\top +\left[\left(1-d\right)^{2} \left(\bD+\bI \right)^{-2} \bbeta +\bL_{d} \bdelta \right]\left[\left(1-d\right)^{2} \left(\bD+\bI\right)^{-2} \bbeta +\bL_{d} \bdelta \right]^\top  \no\\
&=& \bLd \left( \bD^{-1}-\bA \right) \bLd^\top + \left(1-d\right)^{2} \left(\bD+\bI\right)^{-2} \bbeta \bdelta ^\top \bL_{d}^\top + \left(1-d\right)^{2} \bL_{d} \bdelta \bbeta ^\top \left(\bD+\bI\right)^{-2}  \\ 
&& + \bL_{d} \bdelta \bdelta ^\top \bL_{d}^\top +\left(1-d\right)^{4} \left(\bD+\bI\right)^{-2} \bbeta \bbeta ^\top \left(\bD+\bI\right)^{-2} \no 
\end{eqnarray}
Thus, $\bvt_{2}  \sim\mathcal{N} \left(\bzeta-\bLd \bdelta, \bLd \left( \bD^{-1}-A \right) \bLd^\top  \right)$. \\
\\
Moreover, to obtain the asymptotic distribution of $\bvt_3$, we start with the following:
\begin{eqnarray*}
\mathbb{E}\left(\bvt_3\right)&=&\mathbb{E}\left\{\underset{%
n\rightarrow \infty }{\lim }\sqrt{n}\left(\hbUR-\hbRE \right)\right\} \\ 
&=&\mathbb{E}\left\{ \underset{n\rightarrow \infty }{\lim }\sqrt{n}\bLd \left( \bD^{-1}\bH^{\top}\left(\bH \bD^{-1} \bH^{\top} \right)^{-1}\left( \bH \hbMLE - \bh \right)\right)\right\} \\
&=& \bLd \bdelta
\end{eqnarray*}
and to compute the covariance matrix of $\vartheta_3$ we compute as follows:
\begin{eqnarray*}
Cov\left(\bvt_3,\bvt_3^\top \right)&=&Cov\left( \hbUR-\hbRE \right) \\
&=&Cov\left( \bLd\left( \hbMLE-\hbRMLE \right)\right)\\
&=& \bLd \bA \bLd^\top
\end{eqnarray*}
Thus, $\bvt_3\sim{\mathcal{N}\left(\bLd \bdelta,\bLd \bA \bLd^\top \right)}$. \\
\\
Now, we also need to compute $Cov\left(\bvt_1,\bvt_3\right)$ and $Cov\left(\bvt_2,\bvt_3\right)$. We start with computing $Cov\left(\bvt_1,\bvt_3\right)$ as follows:
\begin{eqnarray*}
Cov\left(\bvt_1,\bvt_3\right)&=&Cov\left(\hbUR, \hbUR- \hbRE\right) \\
&=&Cov\left(\hbUR, \hbUR\right) -Cov\left(\hbUR,\hbRE\right)  \\
&=& \bLd \bD^{-1} \bLd^\top - Cov\left(\bLd \hbMLE,\bLd \hbRMLE \right) \\
&=& \bLd \bD^{-1} \bLd^\top - \bLd Cov\left(\hbMLE,\hbRMLE \right) \bLd^\top \\
&=& \bLd \bD^{-1} \bLd^\top - \bLd \left( \bD^{-1} - \bA \right)\bLd^\top \\
&=& \bLd \bA \bLd ^\top
\end{eqnarray*}

Finally, $Cov\left(\bvt_2,\bvt_3\right)$ is obtained as follows:
$Cov\left(\bvt_2,\bvt_3\right)$ is obtained as follows:
\begin{eqnarray*}
Cov\left(\bvt_2,\bvt_3\right)&=&Cov\left(\hbRE, \hbUR- \hbRE\right) \\
&=&Cov\left(\hbRE, \hbUR \right) - Cov\left(\hbRE, \hbRE \right) \\
&=& \bLd \left( \bD^{-1} - \bA \right) \bLd^\top - \bLd \left( \bD^{-1} - \bA \right)\bLd^\top\\
&=& \b0_{p+1}
\end{eqnarray*}
The proof is finished.
\end{proof}


\begin{proof}[Proof of Theorem \ref{bias}]
The asymptotic biases of UR ans RE are already obtained in the proof of Proposition \ref{prop_vector_dist}. 
Now, we continue computing the asymptotic bias of PT as follows:
\begin{eqnarray*}
\mathcal{B}\left( \hbPT \right)&=&\mathbb{E}\left\{ \underset{n\rightarrow \infty }{\lim }\sqrt{n}\left(\hbPT-\bbeta \right) \right\} \\
&=&\mathbb{E}\left\{ \underset{n\rightarrow \infty }{\lim }\sqrt{n}\left( \hbUR -\left(\hbUR -\hbRE \right)\textrm{I}\left[L_{n}<  \chi _{q,\alpha
}^{2}  \right]-\bbeta \right) \right\}\\
&=&\mathbb{E}\left\{ \underset{n\rightarrow \infty }{\lim }\sqrt{n}\left( \hbUR -\bbeta -\left(\hbUR -\hbRE \right)\textrm{I}\left[L_{n}<  \chi _{q,\alpha
}^{2}  \right] \right) \right\}\\
&=&\bzeta-\bLd\bdelta \mathbb{H}_{q+2}\left( \chi_{q,\alpha}^{2};\Delta^2 \right)\cr
\end{eqnarray*}
\begin{eqnarray*}
\mathcal{B}\left( \hbS \right)&=&\mathbb{E}\left\{ \underset{n\rightarrow \infty }{\lim }\sqrt{n}\left( \hbS -\bbeta \right) \right\} \\
&=&\mathbb{E}\left\{ \underset{n\rightarrow \infty }{\lim }\sqrt{n}\left( \hbUR -\left(\hbUR-\hbRE \right)cL_{n}^{-1} - \bbeta\right) \right\}\\
&=&\mathbb{E}\left\{ \underset{n\rightarrow \infty }{\lim }\sqrt{n}\left( \hbUR -\bbeta \right) \right\} -\mathbb{E}\left\{ \underset{n\rightarrow \infty }{\lim }\sqrt{n}\left(\hbUR-\hbRE \right)cL_{n}^{-1} \right\}\\
&=&\bzeta-c\bLd\bdelta \mathbb{E}\left( \chi_{q+2}^{-2}(\Delta^2)\right)
\end{eqnarray*}
\begin{eqnarray*}
\mathcal{B}\left( \hbPS \right)&=&\mathbb{E}\left\{ \underset{n\rightarrow \infty }{\lim }\sqrt{n}\left(  \hbS -\bbeta \right) \right\} \\
&=&\mathbb{E}\left\{ \underset{n\rightarrow \infty }{\lim }\sqrt{n}\left( \hbS -\left(\hbUR-\hbRE \right)\left(1-c L_{n}^{-1}\right)
\textrm{I}\left(L_{n}\leq c \right)-\bbeta\right) \right\}\\
&=&\mathbb{E}\left\{ \underset{n\rightarrow \infty }{\lim }\sqrt{n}\left(\hbS-\bbeta \right) \right\}-\mathbb{E}\left\{ \underset{n\rightarrow \infty }{\lim }\sqrt{n}\left( \hbUR-\hbRE \right)\textrm{I}(L_{n}\leq c ) \right\}\\
&& + \mathbb{E}\left\{ \underset{n\rightarrow \infty }{\lim }\sqrt{n}\left( \hbUR-\hbRE \right)cL_{n}^{-1}\textrm{I}(L_{n}<c)\right\}\\
&=&\bzeta-\bLd \bdelta\mathbb{E}\left( \chi _{q+2}^{-2}(\Delta^2) \right)-\bLd \bdelta \mathbb{H}_{q+2}\left( \chi _{q,\alpha}^{2};\Delta^2 \right) \\
&&+c\bLd \bdelta \mathbb{E}\left( \chi _{q+2}^{-2}(\Delta^2)\textrm{I}\left(\chi _{q+2}^{2}(\Delta^2) < c\right) \right)
\end{eqnarray*}
\end{proof}
\begin{proof}[Proof of Theorem ~\ref{risks}]
The asymptotic covariance of $\hbUR$ and $\hbRE $ are already obtained in the proof of Theorem \ref{prop_vector_dist} respectively in Eqns. ~\ref{cov_UR} and ~\ref{cov_RE}.
Now, we continue with the covariance of $\hbPT$ as follows:
\begin{eqnarray*}
    \bGamma \left(\hbPT \right) &=& \mathbb{E}\left\{ \underset{n\rightarrow \infty }{\lim }\sqrt{n}\left(  \hbPT -\bbeta \right)\left(  \hbPT -\bbeta \right)^{\top} \right\} \\
 &=& \mathbb{E}\left\{ \underset{n\rightarrow \infty }{\lim }\sqrt{n}\left[ \left(\hbUR -\bbeta \right) -\left(\hbUR-\hbRE\right)\tI \left(L_{n} <\chi _{q,\alpha}^{2} \right)\right]\right. \\
 && \left. \times \left[ \left(\hbUR -\bbeta \right) -\left(\hbUR-\hbRE\right)\tI \left(L_{n} <\chi _{q,\alpha}^{2} \right)\right]^{\top} \right\} \\
 &=& \E \left\{ \bvt_1 \bvt_1^{\top}-2\bvt_3 \bvt_1^{\top} \tI \left(L_{n} <\chi _{q,\alpha}^{2} \right)+\bvt_3 \bvt_3^{\top} \tI \left(L_{n} <\chi _{q,\alpha}^{2} \right)\right\}.
\end{eqnarray*}
We already have  $\E\left\{ \bvt_1 \bvt_1^{\top}\right\}=\bLd \bD^{-1} \bLd^\top + \bzeta\bzeta^\top$. We need the followings:
\begin{eqnarray*}
\E \left\{\bvt_3 \bvt_3^{\top} \tI \left(L_{n} <\chi _{q,\alpha}^{2} \right) \right\}= \bLd \bA \bLd^\top  \H_{q+2}\left( \chi_{q,\alpha}^{2};\Delta^2 \right)+ \bLd \bdelta \bdelta^{\top} \bLd^\top \H_{q+4}\left( \chi_{q,\alpha}^{2};\Delta^2 \right)
\end{eqnarray*}
and
\begin{eqnarray*}
\E \left\{\bvt_3 \bvt_1^{\top} \tI \left(L_{n} <\chi _{q,\alpha}^{2} \right) \right\}&=& \E \left\{\bvt_3 \E\left[ \bvt_1^{\top} \tI \left(L_{n} <\chi _{q,\alpha}^{2} \right)|\bvt_3\right] \right\}\\
&=&\E \left\{\bvt_3 \left[\bzeta+\bvt_3-\bL_d\bdelta \right]^{\top}\tI \left(L_{n} <\chi _{q,\alpha}^{2} \right) \right\}\\
&=&\E \left\{\left(\bvt_3 \bzeta ^{\top}+\bvt_3\bvt_3^{\top}-\bvt_3\bdelta^{\top} \bL_d^\top \right) \tI \left(L_{n} <\chi _{q,\alpha}^{2} \right) \right\}\\
&=& \left(\bL_d\bdelta\bzeta^{\top} +\bL_d \bA \bL_d^\top -\bL_d\bdelta\bdelta^{\top}\bL_d^\top \right)\H_{q+2}\left( \chi_{q,\alpha}^{2};\Delta^2 \right)+\bL_d\bdelta\bdelta^{\top}\bL_d^\top \H_{q+4}\left( \chi_{q,\alpha}^{2};\Delta^2 \right)
\end{eqnarray*}
Thus, we obtain
\begin{eqnarray}\label{cov_PT}
\bGamma \left(\hbPT \right) &=& \bLd \bD^{-1} \bLd^\top + \bzeta\bzeta^\top - \bL_d \bA \bL_d^\top\H_{q+2}\left( \chi_{q,\alpha}^{2};\Delta^2 \right)-2\bL_d\bdelta\bzeta^{\top}\H_{q+2}\left( \chi_{q,\alpha}^{2};\Delta^2 \right)\no\\
&&+\bL_d\bdelta\bdelta^{\top}\bL_d^\top \left[2\H_{q+2}\left( \chi_{q,\alpha}^{2};\Delta^2 \right)-\H_{q+4}\left( \chi_{q+4}^{2};\Delta^2 \right)\right].
\end{eqnarray}
In a similar manner, the asymptotic covariance of $\hbS$ can be obtained as follows:
\begin{eqnarray*}
    \bGamma \left(\hbS \right) &=& \mathbb{E}\left\{ \underset{n\rightarrow \infty }{\lim }\sqrt{n}\left(  \hbS -\bbeta \right)\left(  \hbS -\bbeta \right)^{\top} \right\} \\
 &=& \mathbb{E}\left\{ \underset{n\rightarrow \infty }{\lim }\sqrt{n}\left[ \left(\hbUR -\bbeta \right) -c\left(\hbUR-\hbRE\right) L_{n}^{-1}\right]\right. \\
 && \left. \times \left[ \left(\hbUR -\bbeta \right) -c\left(\hbUR-\hbRE\right) L_{n}^{-1}\right]^{\top} \right\} \\
 &=& \E \left\{ \bvt_1 \bvt_1^{\top}-2c\bvt_3 \bvt_1^{\top} L_{n}^{-1}+c^2 \bvt_3 \bvt_3^{\top} L_{n}^{-2}\right\}.
\end{eqnarray*}
Thus, we need the following identities:
\begin{eqnarray*}
\E \left\{\bvt_3 \bvt_3^{\top} L_{n}^{-2} \right\} = \bLd \bA \bLd ^\top \E\left( \chi_{q+2}^{-4}\left(\Delta^2\right) \right)+ \bLd \bdelta \bdelta^{\top} \bLd^\top \E\left( \chi_{q+4}^{-4}\left(\Delta^2\right) \right)
\end{eqnarray*}
and
\begin{eqnarray*}
\E \left\{\bvt_3 \bvt_1^{\top} L_{n}^{-1}\right\}&=& \E \left\{\bvt_3 \E\left[ \bvt_1^{\top} L_{n}^{-1} | \bvt_3\right] \right\}\\
&=&\E \left\{\bvt_3 \left[\bzeta+\bvt_3-\bL_d\bdelta \right]^{\top}L_{n}^{-1} \right\}\\
&=&\E \left\{\bvt_3 \bzeta ^{\top}L_{n}^{-1}+\bvt_3\bvt_3^{\top}L_{n}^{-1}-\bvt_3\bdelta^{\top} \bL_d^{\top} L_{n}^{-1} \right\}\\
&=& \E\left( \chi_{q+2}^{-2}\left(\Delta^2\right) \right)\left(\bL_d\bdelta\bzeta^{\top} +\bL_d \bA \bL_d^{\top} -\bL_d\bdelta\bdelta^{\top}\bL_d ^{\top}\right)+\E\left( \chi_{q+4}^{-2}\left(\Delta^2\right) \right)\bL_d\bdelta\bdelta^{\top}\bL_d^{\top}.
\end{eqnarray*}
Therefore, we obtain
\begin{eqnarray}\label{cov_S}
\bGamma \left(\hbS \right) &=& \bLd \bD^{-1} \bLd^\top + \bzeta\bzeta^\top-\bL_d \bA \bL_d ^\top \left( c^2\E\left\{ \chi_{q+2}^{-4}\left(\Delta^2\right) \right\}-2c\E\left\{\chi_{q+2}^{-2}\left(\Delta^2\right)\right\}\right)\no \\
&&+\bL_d\bdelta\bdelta^{\top}\bL_d ^{\top}\left(c^2\E\left\{ \chi_{q+4}^{-4}\left(\Delta^2\right) \right\}+2c\E\left\{\chi_{q+2}^{2}\left(\Delta^2\right) \right\}-2c\E\left\{\chi_{q+4}^{-2}\left(\Delta^2\right) \right\}  \right)\no\\
&&-2c\bL_d\bdelta\bzeta^{\top}\E\left\{ \chi_{q+2}^{2}\left(\Delta^2\right) \right\} 
\end{eqnarray}
Finally, we present the asymptotic covariance of $\hbPS$ as follows:
\begin{eqnarray*}
    \bGamma \left(\hbPS \right) &=& \mathbb{E}\left\{ \underset{n\rightarrow \infty }{\lim }\sqrt{n}\left(  \hbPS -\bbeta \right)\left(  \hbPS -\bbeta \right)^{\top} \right\} \\
 &=& \mathbb{E}\left\{ \underset{n\rightarrow \infty }{\lim }\sqrt{n}\left[ \left(\hbS -\bbeta \right) -(1-cL_{n}^{-1})\left(\hbUR-\hbRE\right)\tI \left(L_{n} < c \right)  \right]\right. \\
 && \left. \times \left[ \left(\hbS -\bbeta \right) -(1-cL_{n}^{-1})\left(\hbUR-\hbRE\right)\tI \left(L_{n} < c \right) \right]^{\top} \right\} \\
 &=& \bGamma \left(\hbS \right) + \E \left\{ \bvt_3 \bvt_3^{\top} (1-cL_{n}^{-1})^2 \tI \left(L_{n} < c \right)\right\}\\
 &&-2 \E \left\{  \bvt_3 \bvt_1^{\top} (1-cL_{n}^{-1})  \tI \left(L_{n} < c \right)\right\}\\
 &&+ 2c\E \left\{ c^2 \bvt_3 \bvt_3^{\top} L_{n}^{-1}(1-cL_{n}^{-1})\tI \left(L_{n} < c \right)\right\}\\
 &=& \bGamma \left(\hbS \right) + \E \left\{ \bvt_3 \bvt_3^{\top}\tI \left(L_{n} < c \right)\right\}-2 \E \left\{  \bvt_3 \bvt_1^{\top} (1-cL_{n}^{-1})  \tI \left(L_{n} < c \right)\right\}\\
 &&-2c^2\E \left\{  \bvt_3 \bvt_3^{\top} L_{n}^{-2} \tI \left(L_{n} < c \right)\right\}.
\end{eqnarray*}
Now we need the following identities:
\begin{eqnarray*}
\E \left\{\bvt_3 \bvt_3^{\top} \tI \left(L_{n} < c\right) \right\}= \bLd \bA \bLd^{\top}  \H_{q+2}\left( c;\Delta^2 \right)+ \bLd \bdelta \bdelta^{\top} \bLd^{\top} \H_{q+4}\left(c;\Delta^2 \right)
\end{eqnarray*}
and 
\begin{eqnarray*}
\E \left\{\bvt_3 \bvt_3^{\top} L_{n}^{-2} \tI \left(L_{n} < c \right) \right\} &=& \bLd \bA \bLd^{\top}  \E \left\{ \chi_{q+2}^{-4}\left(\Delta^2\right) \tI \left(\chi_{q+2}^{-4}\left(\Delta^2\right) < c\right)\right\}\\
&&+ \bLd \bdelta \bdelta^{\top} \bLd \E\left\{ \chi_{q+4}^{-4}\left(\Delta^2\right) \tI \left(\chi_{q+4}^{2}\left(\Delta^2\right) <c \right)\right\}.
\end{eqnarray*}
Moreover, we also have,
\begin{eqnarray*}
\E \left\{\bvt_3 \bvt_1^{\top} \left(1-cL_{n}^{-1}\right) \tI \left(L_{n} < c \right) \right\}&=& \E \left\{\bvt_3 \E\left[ \bvt_1^{\top} \left(1-cL_{n}^{-1}\right) \tI \left(L_{n} < c \right) | \bvt_3\right] \right\}\\
&=&\E \left\{\bvt_3 \left[\bzeta+\bvt_3-\bL_d\bdelta \right]^{\top}\left(1-cL_{n}^{-1}\right) \tI \left(L_{n} < c \right) \right\}\\
&=&\E \left\{\left(\bvt_3 \bzeta ^{\top}+\bvt_3\bvt_3^{\top}-\bvt_3\bdelta^{\top} \bL_d\right) \left(1-cL_{n}^{-1}\right) \tI \left(L_{n} < c \right) \right\}\\
&=& \bL_d\bdelta \bzeta^{\top} \E\left\{ \left(1-c\chi_{q+2}^{-2}\left(\Delta^2\right) \right)\tI \left(\chi_{q+2}^{-2}\left(\Delta^2\right)  < c \right)\right\}\\
&&+\bL_d \bA \bL_d^\top \E\left\{ \left(1-c\chi_{q+2}^{-2}\left(\Delta^2\right) \right)\tI \left(\chi_{q+2}^{-2}\left(\Delta^2\right)  < c \right)\right\}\\
&&+\bL_d\bdelta\bdelta^{\top}\bL_d\left[\E\left\{ \left(1-c\chi_{q+4}^{-2}\left(\Delta^2\right) \right)\tI \left(\chi_{q+4}^{-2}\left(\Delta^2\right)  < c \right)\right\}\right]\\
&&-\bL_d\bdelta\bdelta^{\top}\bL_d ^{\top} \left[ \E\left\{ \left(1-c\chi_{q+2}^{-2}\left(\Delta^2\right) \right)\tI \left(\chi_{q+2}^{-2}\left(\Delta^2\right)  < c \right)\right\} \right].
\end{eqnarray*}
Thus, we finally obtain
\begin{eqnarray}\label{cov_PS}
    \bGamma \left(\hbPS \right) &=& \bGamma \left(\hbS \right)+\bLd \bA \bLd ^{\top} \H_{q+2}\left( c;\Delta^2 \right)+ \bLd \bdelta \bdelta^{\top} \bLd^{\top}  \H_{q+4}\left(c;\Delta^2 \right)\no \\
    &&-c^2 \bLd \bA \bLd^{\top} \E\left\{ \left(\chi_{q+2}^{-4}\left(\Delta^2\right) \right)\tI \left(\chi_{q+2}^{-2}\left(\Delta^2\right)  < c \right)\right\} \no \\
    && -c^2 \bLd \bdelta \bdelta^{\top} \bLd^{\top} \E\left\{ \left(\chi_{q+4}^{-4}\left(\Delta^2\right) \right)\tI \left(\chi_{q+4}^{-2}\left(\Delta^2\right)  < c \right)\right\} \no \\
    &&-2\left(\bL_d\bdelta \bzeta^{\top} +\bL_d \bA \bL_d^{\top} -\bL_d\bdelta\bdelta^{\top}\bL_d ^{\top}\right)\E\left\{ \left(1-c\chi_{q+2}^{-2}\left(\Delta^2\right) \right)\tI \left(\chi_{q+2}^{-2}\left(\Delta^2\right)  < c \right)\right\} \no \\
    &&-2\bL_d\bdelta\bdelta^{\top}\bL_d ^{\top}\E\left\{ \left(1-c\chi_{q+4}^{-2}\left(\Delta^2\right) \right)\tI \left(\chi_{q+4}^{-2}\left(\Delta^2\right)  < c \right)\right\}
\end{eqnarray}

Now, using the asymptotic covariances and the definition of the risk of an estimator, we can easily obtain the risk functions of the estimators. Let $\bW = \bI$, using Eqn. (\ref{cov_UR}), making use of the spectral decomposition of $\bD$, there exists an orthogonal matrix $\mathrm{\bQ }$ such that $\bQ^\top \bD \bQ = \bLambda = {\rm diag}\left(\lambda _{1} ,\ldots ,\lambda _{p+1} \right)$, where $\lambda _{1} \ge \ldots \ge \lambda _{p+1} >0$ are the eigenvalues of $\bD$, we have the following
\begin{eqnarray}\label{risk_UR}
\mathcal{R}\left( \hbUR \right)&=&
{\rm tr}\left[\bGamma\left(\hbUR \right)\right]\no\\
&=&{\rm tr}\left[ \bLd \bD^{-1} \bLd^\top + \bzeta \bzeta^\top \right]\no\\
&=&{\rm tr}\left[\bLd \bD^{-1} \bLd^\top + \left(1-d\right)^{4} \left(\bD+\bI\right)^{-2} \bbeta \bbeta ^\top \left(\bD+\bI\right)^{-2}  \right]\no\\
&=& \sum_{j=1}^{p+1} \left\{\frac{\left(\lambda _{j} +d\right)^{2} \left(\lambda _{j} +d-2\right)^{2} }{\lambda _{j} \left(\lambda _{j} +1\right)^{4} }+ \frac{\left(1-d\right)^{4} \theta _{j}^{2} }{\left(\lambda _{j} +1\right)^{4} }\right\}
\end{eqnarray}
where $\btheta = \bQ^\top \bbeta =\left(\theta _{1}, \ldots , \theta _{p+1} \right)^{\top}$ and $${\rm tr}\left[\bLd \bD^{-1} \bLd^\top \right] = \sum_{j=1}^{p+1} \left\{\frac{\left(\lambda _{j} +d\right)^{2} \left(\lambda _{j} +d-2\right)^{2} }{\lambda _{j} \left(\lambda _{j} +1\right)^{4} }\right\}$$ and $${\rm tr}\left[\bzeta \bzeta ^\top \right] = \sum_{j=1}^{p+1} \left\{\frac{\left(1-d\right)^{4} \theta _{j}^{2} }{\left(\lambda _{j} +1\right)^{4}}\right\}.$$

The risk of the restricted estimator RE is obtained using Eqn. (\ref{cov_RE}) as follows:

\begin{eqnarray}\label{risk_RE}
\mathcal{R}\left( \hbRE \right) 
&=& {\rm tr}\left[ \bGamma\left(\hbRE \right)\right] \no\\
&=& {\rm tr}\left[ \bLd \left( \bD^{-1}-\bA \right) \bLd ^\top \right] + \left[\left(1-d\right)^{2} \left(\bD+\bI \right)^{-2} \bbeta +\bLd \bdelta \right]^\top \left[\left(1-d\right)^{2} \left(\bD+\bI \right)^{-2} \bbeta +\bL_{d} \bdelta \right] \no \\
&=& {\rm tr}\left[ \bLd \left( \bD^{-1}-\bA \right)\bLd^\top \right]+ \left(1-d\right)^{4} \bbeta^\top \left(\bD+\bI \right)^{-4} \bbeta +  \left(1-d\right)^{2} \bbeta ^\top \left(\bD+\bI \right)^{-2} \bLd \bdelta \no \\
&& + \left(1-d\right)^{2}  \bdelta^\top \bLd^\top \left(\bD+\bI \right)^{-2} \bbeta +  \bdelta ^\top \bLd^\top \bLd\bdelta\no \\
&=& \sum_{j=1}^{p+1} \left\{\frac{\left(\lambda_j+2-d\right)^2\left(\lambda_j+d\right)^2}{\left(\lambda_j+1\right)^4} \left[\frac{1-\lambda_j a_{jj}}{\lambda_j}+\delta_j^2 \right] + \frac{\left(1-d\right)^2}{\left(\lambda_j+1\right)^4}\left[ \beta_j^2 + 2\beta_j^2\delta_j \left(\lambda_j+2-d\right) \left(\lambda_j+d\right)\right] \right\}\no\\
\end{eqnarray}
where $a_{jj}$ is the jth diagonal element of the matrix $\bA$.

The asymptotic risk of the preliminary test estimator PT is computed using Eqn. (\ref{cov_PT}) as follows:
\begin{eqnarray}\label{risk_PT}
\mathcal{R}\left( \hbPT \right)&=&
{\rm tr}\left[\bGamma\left(\hbPT \right)\right]\no\\
&=&\mathcal{R}\left( \hbUR \right)-2{\rm tr}\left[\bL_d \bA \bL_d^\top\right]\H_{q+2}\left( \chi_{q,\alpha}^{2};\Delta^2 \right)-2{\rm tr}\left[\bL_d\bdelta\bzeta^{\top}\right]\H_{q+2}\left( \chi_{q,\alpha}^{2};\Delta^2 \right)\no\\
&&+{\rm tr}\left[\bL_d\bdelta\bdelta^{\top}\bL_d^\top\right] \left[2\H_{q+2}\left( \chi_{q,\alpha}^{2};\Delta^2 \right)-\H_{q+4}\left( \chi_{q+4}^{2};\Delta^2 \right)\right]
\end{eqnarray}
where $${\rm tr}\left[\bLd \bdelta \bdelta^\top \bLd^\top \right] = \sum_{j=1}^{p+1} \left\{\frac{\left(\lambda _{j} +d\right)^{2} \left(\lambda _{j} +d-2\right)^{2} }{ \left(\lambda _{j} +1\right)^{4} }\delta_j^2\right\}$$  and 
$${\rm tr}\left[\bLd \bdelta \bzeta^\top \right] = \sum_{j=1}^{p+1} \left\{(1-d)^2\frac{\left(\lambda _{j} +d\right) \left(\lambda _{j} +d-2\right) }{ \left(\lambda _{j} +1\right)^{4} }\delta_j \beta_j\right\}. $$
The asymptotic risk of the shrinkage estimator S is computed using Eqn. (\ref{cov_S}) as follows:
\begin{eqnarray}\label{risk_S}
\mathcal{R}\left( \hbS \right)&=&
{\rm tr}\left[\bGamma\left(\hbS \right)\right]\no\\
&=&{\rm tr}\left[\bLd \bD^{-1} \bLd^\top -2c\left(\bL_d\bdelta\bzeta^{\top} +\bL_d \bA \bL_d^\top-\bL_d\bdelta\bdelta^{\top}\bL_d^\top \right)\E\left( \chi_{q+2}^{-2}(\Delta^2) \right)\right. \no \\
&&\left.-2c\bL_d\bdelta\bdelta^{\top}\bL_d^\top \E\left( \chi_{q+4}^{-2}(\Delta^2) \right) + c^2\bL_d \bA \bL_d ^\top\E\left( \chi_{q+2}^{-4}(\Delta^2) \right) \no \right.\\
&&\left. +c^2 \bL_d\bdelta\bdelta^{\top}\bL_d ^\top \E\left( \chi_{q+4}^{-4}(\Delta^2) \right) \no \right]\\
&=& \mathcal{R}\left( \hbUR \right)+ {\rm tr}\left[\bLd \bA \bLd^\top\right] \left( c^2\E\left\{ \chi_{q+2}^{-4}\left(\Delta^2\right) \right\}-2c\E\left\{\chi_{q+2}^{-2}\left(\Delta^2\right)\right\}\right) \no\\
&&+{\rm tr}\left[\bLd \bdelta \bdelta^\top \bLd^\top \right] \left(c^2\E\left\{ \chi_{q+4}^{-4}\left(\Delta^2\right) \right\}+2c\E\left\{\chi_{q+2}^{2}\left(\Delta^2\right) \right\}-2c\E\left\{\chi_{q+4}^{-2}\left(\Delta^2\right) \right\}  \right) \no \\
&&-2c{\rm tr}\left[ \bLd \bdelta \bzeta^\top\right]\E\left\{ \chi_{q+2}^{2}\left(\Delta^2\right) \right\}.
\end{eqnarray}
Finally, the asymptotic risk of the estimator PS is obtained using Eqn. (\ref{cov_PS}) as follows:
\begin{eqnarray}\label{risk_PS}
\mathcal{R}\left( \hbPS \right)&=&
{\rm tr}\left[\bGamma\left(\hbPS \right)\right]\no\\
&=&{\rm tr}\left[\bGamma \left(\hbS \right)+\bLd \bA \bLd^\top  \H_{q+2}\left( c;\Delta^2 \right)+ \bLd \bdelta \bdelta^{\top} \bLd^\top \H_{q+4}\left(c;\Delta^2 \right)\no \right.\\
    && \left.-c^2 \bLd \bA \bLd^\top \E\left\{ \left(\chi_{q+2}^{-4}\left(\Delta^2\right) \right)\tI \left(\chi_{q+2}^{-2}\left(\Delta^2\right)  < c \right)\right\} \no \right. \\
    && \left.-c^2 \bLd \bdelta \bdelta^{\top} \bLd^\top \E\left\{ \left(\chi_{q+4}^{-4}\left(\Delta^2\right) \right)\tI \left(\chi_{q+4}^{-2}\left(\Delta^2\right)  < c \right)\right\} \no \right.\\
    &&\left.-2\left(\bL_d\bdelta \bzeta^{\top} +\bL_d \bA \bL_d^\top -\bL_d\bdelta\bdelta^{\top}\bL_d^\top \right)\E\left\{ \left(1-c\chi_{q+2}^{-2}\left(\Delta^2\right) \right)\tI \left(\chi_{q+2}^{-2}\left(\Delta^2\right)  < c \right)\right\} \no\right. \\
    &&\left.-2\bL_d\bdelta\bdelta^{\top}\bL_d^\top \E\left\{ \left(1-c\chi_{q+4}^{-2}\left(\Delta^2\right) \right)\tI \left(\chi_{q+4}^{-2}\left(\Delta^2\right)  < c \right)\right\} \right] \no\\
    &=& \mathcal{R}\left( \hbS\right) \no\\
  &&+ {\rm tr}\left[ \bLd \bA \bLd^\top \right]\left[ \H_{q+2}\left( c;\Delta^2 \right)-c^2 \E\left\{ \left(\chi_{q+2}^{-4}\left(\Delta^2\right) \right)\tI \left(\chi_{q+2}^{-2}\left(\Delta^2\right)  < c \right)\right\} \no \right.\\
  &&\left.-2\E\left\{ \left(1-c\chi_{q+2}^{-2}(\Delta^2) \right)\tI \left(\chi_{q+2}^{-2}\left(\Delta^2\right)  < c \right)\right\}\right] \no\\
  &&+ {\rm tr}\left[ \bLd \bdelta \bdelta^\top \bLd^\top \right] \left[\H_{q+2}\left( c;\Delta^2 \right)-c^2\E\left\{ \left(\chi_{q+4}^{-4}(\Delta^2) \right)\tI \left(\chi_{q+2}^{-2}(\Delta^2)  < c \right)\right\}  \no \right.\\
  &&\left. +2c\E\left\{ \left(1-c\chi_{q+2}^{-2}(\Delta^2) \right)\tI \left(\chi_{q+2}^{-2}(\Delta^2)  < c \right)\right\} -2\E\left\{ \left(1-c\chi_{q+4}^{-2}(\Delta^2) \right)\tI \left(\chi_{q+4}^{-2}(\Delta^2)  < c \right)\right\}\right] \no\\
 && -2{\rm tr}\left[ \bLd \bdelta \bzeta^\top \right]\E\left\{ \left(1-c\chi_{q+4}^{-2}(\Delta^2) \right)\tI \left(\chi_{q+4}^{-2}(\Delta^2)  < c \right)\right\}
\end{eqnarray}
\end{proof}

\end{document}